\newcommand{\Z}{\hbox{$\mathbb Z$}}
\newcommand{\Q}{\hbox{$\mathbb Q$}}
\newcommand{\R}{\hbox{$\mathbb R$}}
\newcommand{\PP}{\hbox{$\mathcal P$}}
\newcommand{\A}{\hbox{$\mathcal A$}}
\DeclareMathOperator{\sgn}{sgn}
\DeclareMathOperator{\sig}{sig}
\DeclareMathOperator{\cone}{cone}
\DeclareMathOperator{\conv}{conv}
\DeclareMathOperator{\coloneqq}{:=}
\newcommand{\PG}{\mathcal{P}_G}
\newcommand{\APG}{\mathcal{A}(\PG)}
\theoremstyle{definition}
\newtheorem{theorem}{Theorem}[section]
\newtheorem{corollary}[theorem]
{Corollary}
\newtheorem{lemma}[theorem]{Lemma}
\newtheorem{notation}[theorem]{Notation}
\newtheorem{proposition}[theorem]{Proposition}
\newtheorem{definition}[theorem]{Definition}
\newtheorem{observation}[theorem]{Observation}
\newtheorem{example}[theorem]{Example}
\newtheorem*{spthmA}{Theorems 3 and 5}{\bfseries\upshape}{\itshape}
\newtheorem*{spthmB}{Theorems 4 and 6}{\bfseries\upshape}{\itshape}
\begin{document}

\title{Quadratic Generated Normal Domains from Graphs
}
%\subtitle{Do you have a subtitle?\\ If so, write it here}

%\titlerunning{Short form of title}        % if too long for running head

\author{Michael A. Burr}
\author{Drew J. Lipman  %etc.
}

%\authorrunning{Short form of author list} % if too long for running head

\address{M. Burr \\ Dept of Mathematical Sciences, Clemson University \\
							burr2@clemson.edu}         %  \\
%             \emph{Present address:} of F. Author  %  if needed
\address{D. Lipman \\ Dept of Mathematical Sciences, Clemson University \\
              djlipma@g.clemson.edu
}

%\date{Received: date / Accepted: date}
% The correct dates will be entered by the editor

%\maketitle

\begin{abstract}
Determining whether an arbitrary subring $R$ of $k[x_1^{\pm 1},\dots, x_n^{\pm 1}]$ is a normal domain is, in general, a nontrivial problem, even in the special case of a monomial generated domain.
In this paper, we provide a complete characterization of the normality and normalizations of quadratic-monomial generated domains.
For a quadratic-monomial generated domain $R$, we develop a combinatorial structure that assigns, to each quadratic monomial of the ring, an edge in a mixed signed, directed graph $G$, i.e., a graph with signed edges and directed edges.
We classify the normality and the normalizations of such rings in terms of a generalization of the combinatorial odd cycle condition on $G$.
\keywords{Edge Rings \and Odd cycle condition \and Normal Domains \and Normalization \and Quadratic-monomial Generated Domains}
\end{abstract}

\maketitle
%%%%%%%%%%%%%%%%%%%%%%%%%%      This is to remove the page number from the bottom of the first page %%%%%%%%%%%%%%%%%%%%%%%
%\thispagestyle{empty}

\section{Introduction}
\label{sec:Intro}
It is, in general, a nontrivial problem to determine whether an arbitrary subring of a Laurent polynomial ring $k[x_1^{\pm 1},\dots, x_n^{\pm 1}]$ over a field $k$ is a normal domain, even in the special case of monomial generated domains, see, e.g., \cite{BrunsKoch:2001,Vasconcelos:2005,HunekeSwanson:2006}.  Hibi and Ohsugi \cite{OhsugiHibi:1998} and Simis, Vasconcelos, and Villarreal \cite{Villarreal:1998} independently study the special case of quadratic-monomial generated subrings of the polynomial ring $k[x_1,\dots,x_n]$ (see also \cite{BrunsVillarreal:1997, Villarreal:1994, Villarreal:1995, Villarreal:1996}).  In their work, they construct a quadratic-monomial generated subring $k[G]$, called the {\em edge ring}\footnote{The edge ring is a different object than the edge ideal of a graph.  The edge ring and edge ideal are generated by the same elements, but one as a subring and the other as an ideal of $k[x_{1},\dots,x_n]$.  For more details on the edge ideal, see, for example, \cite{Villarreal:1994,Sullivant:2008} and the references included therein.  It is also distinct from the edge algebra (sometimes also called the edge ring), see, for example \cite{Estrada:2000,GitlerValencia:2005} and the references included therein.  There is another distinct concept called the edge ring, see, for example \cite{Zalavsky:2009}.}, from a graph $G$.  From the graph, they give a combinatorial characterization, called the odd cycle condition, of the normality of $k[G]$ in terms of $G$.  Moreover, when $k[G]$ is not a normal domain, they use the combinatorial data of $G$ to construct the normalization of $k[G]$.

In this paper, we generalize their work and provide a complete characterization of the normality of and normalizations for quadratic-monomial generated domains in the Laurent polynomial ring $k[x_1^{\pm 1},\dots,x_n^{\pm 1}]$.  This case is considerably more complicated than the situation in the previous work because the negative powers allow exponents to cancel.  In order to address these difficulties, we introduce new proofs; in particular, our proofs are are more graph-theoretic in nature than in the previous work.  Combinatorially, we study mixed signed, directed graphs, i.e., graphs with signed edges and directed edges and provide generalizations of the odd cycle condition and the combinatorial data of $G$ for the normalization of $k[G]$ in this case.

Since edge rings come equipped with an accessible combinatorial structure, these rings have been studied extensively in the commutative algebra setting, see, e.g., \cite{HibiMatsudaOhsugi:2016,HibiNishiyamaOhsugiShikama:2014,HibiKatthan:2014,HibiHigashitaniKimura:2014,TatakisThoma:2013,Matsui:2003,GitlerVillareal:2011,HerzogHibiZheng:2004,OhsugiHibi:2000,DAli:2015,HibiOhsugi:2006,Villarreal:2005,HibiMoriOhsugiShikama:2016,BermejoGarciaReyes:2015,BermejoGimenezSimis:2009,OhsugiHibi:2017,HibiKattan:2014}.  These papers classify commutative algebraic properties for edge rings and use edge rings to easily construct rings which are examples and non-examples for these properties.  We expect that our generalization of edge rings will naturally extend to these settings, and thereby extend the results and examples in these papers.  In particular, the study of Serre's conditions in \cite{HibiKattan:2014} has already been extended to this more general setting in \cite{LipmanBurr:2017} by providing a more general theorem with new examples.  Additionally, in the field of algebraic statistics, normal domains and semigroups have several applications, see, for example \cite{Sullivant:2010,SturmfelsWelker:2012} and the references included therein for examples.  By extending the collection of examples of normal and non-normal graphs, our results may have applications in these associated fields.

\subsection{Main Results}

Suppose that $R$ is a quadratic-monomial generated subring of $k[x_1^{\pm 1},\dots,x_n^{\pm 1}]$.  From $R$, we construct a mixed signed, directed graph $G$, i.e., the edges of $G$ are either directed or signed, such that the edge ring $k[G]$ associated to $G$ equals $R$.  Our first main result characterizes the normality of $R$ in terms of the combinatorial properties of $G$:

\begin{spthmA}
Let $G$ be a mixed signed, directed graph.  $k[G]$ is normal if and only if $G$ satisfies the generalized odd cycle condition, i.e., for any two disjoint cycles, each with an odd number of signed edges, between them there is either no path or a generalized alternating path, see Definition \ref{def:genaltpath}.
\end{spthmA}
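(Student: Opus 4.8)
The plan is to convert the statement into a question about affine semigroups and then resolve it combinatorially on $G$. Writing $A_G = \{a_e : e \in E(G)\} \subseteq \mathbb{Z}^n$ for the exponent vectors of the quadratic monomials attached to the edges of $G$ and $S_G = \mathbb{N}A_G$ for the affine semigroup they generate, we have $k[G] = k[S_G] \subseteq k[x_1^{\pm 1},\dots,x_n^{\pm 1}] = k[\mathbb{Z}^n]$, and the normalization of $k[S_G]$ in its field of fractions is $k[\overline{S_G}]$, where $\overline{S_G} = \mathbb{Z}A_G \cap \mathbb{Q}_{\ge 0}A_G$ is the saturation of $S_G$. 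Hence $k[G]$ is normal if and only if $S_G = \overline{S_G}$, and it suffices to prove that this holds precisely when $G$ satisfies the generalized odd cycle condition. Two mechanisms will recur: first, for any cycle $C$ the vector $\tfrac{1}{2}\sum_{e \in C} a_e$ lies in $\mathbb{Z}^n$, but --- as with an ordinary odd cycle --- it lies in $S_G$ when $C$ carries an even number of signed edges and fails to lie in $S_G$ (contributing an obstruction to normality) when $C$ carries an odd number of signed edges; second, an ``even'' closed walk, one whose signed edges are traversed an even number of times in total, admits an alternating decomposition showing that $\tfrac{1}{2}$ times the sum of its edge vectors lies in $S_G$. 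The subtlety throughout --- absent from the classical positive, undirected case --- is that directed edges let coordinates cancel, so these mechanisms must be re-established for mixed signed, directed graphs.

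For the direction ``condition fails, hence not normal,'' I would build an explicit element of $\overline{S_G} \setminus S_G$. Suppose $C_1$ and $C_2$ are vertex-disjoint cycles, each with an odd number of signed edges, joined by a path $P$ but by no generalized alternating path, and set $v = \tfrac{1}{2}\sum_{e \in C_1} a_e + \tfrac{1}{2}\sum_{e \in C_2} a_e \in \mathbb{Q}_{\ge 0}A_G$. To see that $v \in \mathbb{Z}A_G$, consider the closed walk that runs around $C_1$, along $P$, around $C_2$, and back along $P$: since each of $C_1,C_2$ contributes an odd number of signed edges and $P$ is traversed twice, this walk is even, so by the second mechanism $\tfrac{1}{2}$ of the sum of its edge vectors lies in $S_G$, and subtracting the integral vector $\sum_{e \in P} a_e$ from it leaves exactly $v$; thus $v \in \mathbb{Z}A_G$ and $v \in \overline{S_G}$. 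The heart of this direction is showing $v \notin S_G$: a hypothetical representation $v = \sum_e \mu_e a_e$ with $\mu_e \in \mathbb{N}$, compared coordinate-by-coordinate with the $\tfrac{1}{2}$-weighting on $C_1 \cup C_2$ and reduced, would force a nonnegative integral edge-weighting whose support meets both $C_1$ and $C_2$ and simultaneously ``corrects the odd-signed parity'' of each; tracing this support produces a walk between $C_1$ and $C_2$ with the alternating structure of Definition~\ref{def:genaltpath}, contradicting the hypothesis. Making ``corrects the odd-signed parity along a walk'' literally equivalent to ``generalized alternating path'' is, in effect, what pins down the right definition.

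For the direction ``condition holds, hence normal,'' take $v \in \overline{S_G}$ and fix a representation $v = \sum_e \lambda_e a_e$ with $\lambda_e \in \mathbb{Q}_{\ge 0}$ chosen to be a vertex of the polytope $P_v = \{\mu \in \mathbb{R}_{\ge 0}^{E(G)} : \sum_e \mu_e a_e = v\}$; we induct on $\sum_e \lambda_e$. By the mixed-graph analogue of the standard fact that a vertex of a $b$-matching polytope has its fractional part supported on disjoint odd cycles, such a $\lambda$ is half-integral and the edges with $\lambda_e \notin \mathbb{Z}$ form a disjoint union of cycles $C_1,\dots,C_m$, each carrying an odd number of signed edges. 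If $m = 0$ then $v \in S_G$ and we are done. If $m \ge 2$, choose two of these cycles; the generalized odd cycle condition supplies a generalized alternating path between them, and assembling that path with the two $\tfrac{1}{2}$-weighted cycles yields an element $w \in S_G \setminus \{0\}$ with $v - w \in \overline{S_G}$ and strictly smaller coefficient sum, so the induction proceeds. The awkward case is $m = 1$: here one must use that $v \in \mathbb{Z}A_G$, not merely $v \in \mathbb{Z}^n$ --- an integral representation $v = \sum_e \mu_e a_e$ gives a nonzero lattice relation $\sum_e (\lambda_e - \mu_e) a_e = 0$, and the support of any such relation meeting an odd-signed cycle must meet a second odd-signed cycle; applying the condition to this pair again produces the path needed to absorb the fractional part into $S_G$.

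I expect the main obstacle to be the combinatorial ``absorption lemma'' underpinning the $m \ge 2$ step --- that an odd-signed cycle, a second odd-signed cycle, and a generalized alternating path between them combine into a genuine element of $S_G$ --- together with its use in the $m = 1$ case. In the classical setting this lemma is the substance of the Ohsugi--Hibi and Simis--Vasconcelos--Villarreal theorems and is proved by splitting even closed walks into two matchings; in the mixed signed, directed setting the possibility of cancellation among exponents means this bookkeeping has to be redone via a careful walk-decomposition argument that tracks signed versus directed edges, and it is precisely this argument that the generalized notion of alternating path is designed to make go through.
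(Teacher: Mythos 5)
Your overall strategy for the signed-edge core of the theorem is the same as the paper's: for necessity, exhibit the explicit witness $\tfrac12\sum_{e\in C_1}a_e+\tfrac12\sum_{e\in C_2}a_e$ and show it lies in $\mathbb{Z}A_G\cap\mathbb{Q}_{\ge 0}A_G$ but not in $\mathbb{N}A_G$ via a parity obstruction; for sufficiency, reduce a fractional representation until its fractional part is half-integral and supported on disjoint odd(-signed) cycles, show these cycles come in pairs, and absorb each pair using an alternating path (your ``absorption lemma'' is the paper's Lemma \ref{lem:keyproduct}; your ``vertex of $P_v$'' step is the paper's elimination of reducing closed walks, Corollary \ref{cor:reduction1} and Lemma \ref{lem:oddcyclesonly}, in different language). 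Where you genuinely diverge is the mixed signed, directed case: you propose to redo all of this directly on $G$, whereas the paper replaces each directed edge $(i,j)$ by an artificial vertex $t_{(i,j)}$ and the signed pair $-it_{(i,j)}$, $+t_{(i,j)}j$, proves $k[\widetilde G]\cap k[x_1^{\pm1},\dots,x_n^{\pm1}]=k[G]$ (Lemma \ref{lem:Equality}), and transfers the signed-graph theorem wholesale; the generalized alternating path of Definition \ref{def:genaltpath} is exactly the preimage of an ordinary alternating path in $\widetilde G$. The reduction buys you a lot: the half-integrality-at-vertices fact, the walk decompositions, and the absorption lemma only ever need to be proved for signed graphs, and all the directed-edge cancellation is quarantined in one easy lemma. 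Your direct route is viable but forces you to re-establish each of these structural facts in the presence of cancellation, which is precisely the hard bookkeeping you flag at the end.

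Two specific soft spots. First, your ``second mechanism'' overclaims: for the closed walk $C_1\,P\,C_2\,P^{-1}$ with an even number of signed edges, half the sum of its edge vectors need not lie in $S_G=\mathbb{N}A_G$; the coefficients one is forced to choose so that intermediate coordinates cancel are $\pm1$ (see the paper's choice of $a_\ell$ with $a_\ell\sgn(i_{\ell-1}i_\ell)$ alternating in Lemma \ref{lem:converse:Ehrhart}), so you only get membership in $\mathbb{Z}A_G$. That weaker statement is all you need for $v\in\overline{S_G}$, but as written the claim is false and should be corrected.

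Second, your $m=1$ case is confused, and in fact vacuous. Every generator $\rho(e)$ has even coordinate sum ($\pm2$ for signed edges, $0$ for directed edges), so every element of $\mathbb{Z}A_G$ has even total degree; a single half-weighted cycle with an odd number of signed edges has odd total degree and therefore can never equal $v-\lfloor\lambda\rfloor\cdot A\in\mathbb{Z}A_G$. The same degree count applied to the restriction of the fractional part to each component shows the fractional cycles come in pairs \emph{within each component} (this matters: your $m\ge2$ step silently assumes the two chosen cycles are joined by a path, i.e., lie in the same component). This is the paper's parity argument in Theorem \ref{thm:MainResult}; your proposed detour through lattice relations (``the support of any relation meeting an odd-signed cycle must meet a second one'') is both unproven and aimed at absorbing something that cannot be absorbed --- the correct resolution is that the case does not occur.
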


When $k[G]$ is not normal, we call pairs of disjoint cycles exceptional if they fail the odd cycle condition, i.e., each cycle has an odd number of signed edges and between them there is a path, but no generalized alternating path.  For each pair $\Pi$ of exceptional cycles, we define a monomial $M_{\Pi}$ in Definition \ref{def:signedPi}.  These monomials then generate the normalization of $k[G]$.

\begin{spthmB}
Let $G$ be a mixed signed, directed graph.  Suppose that $\Pi_1,\dots,\Pi_q$ are the exceptional pairs of cycles in $G$.  Then, the normalization of $k[G]$ is generated as an algebra by $M_{\Pi_1},\dots,M_{\Pi_q}$ over $k[G]$
\end{spthmB}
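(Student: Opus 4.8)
The plan is to establish the two containments $k[G] \subseteq k[M_{\Pi_1},\dots,M_{\Pi_q}][k[G]] \subseteq \overline{k[G]}$ (the first is trivial) and then show the latter ring is already integrally closed, which gives equality with the normalization. Since all rings in sight are monomial (toric) subrings of the Laurent ring, I would pass to the semigroup picture: $k[G]$ corresponds to the affine semigroup $S_G$ generated by the exponent vectors of the quadratic monomials attached to edges of $G$, sitting inside the lattice $L = \mathbb{Z}S_G$. The normalization $\overline{k[G]}$ then corresponds to the saturation $\bar S_G = \cone(S_G) \cap L$. So the theorem is equivalent to the semigroup statement: the exponent vector $m_\Pi$ of each $M_\Pi$ lies in $\bar S_G$, and together with $S_G$ these vectors generate all of $\bar S_G$.

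First I would show each $m_\Pi \in \bar S_G$, i.e. that $M_\Pi$ is integral over $k[G]$: concretely, I expect that for a suitable positive integer $N$ (likely $N=2$) the product of $m_\Pi$ with itself, or $N m_\Pi$, lies in $S_G$ — this should follow from the explicit description of $M_\Pi$ in Definition \ref{def:signedPi} by exhibiting an actual walk in $G$ (built from the two exceptional cycles plus the connecting path, traversed so the connecting edges are used an even number of times) whose exponent vector equals $N m_\Pi$. This is the bookkeeping analogue of the classical fact that for a pair of odd cycles joined by a path, doubling the associated monomial lands back in the edge ring. The genuinely new wrinkle over Ohsugi–Hibi and Simis–Vasconcelos–Villarreal is that signed and directed edges let exponents cancel, so I must check the cancellations in the walk produce exactly $2m_\Pi$ and nothing parasitic; this is where the precise definition of "generalized alternating path" and the parity conditions (odd number of signed edges on each cycle) get used.

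The main obstacle is the reverse inclusion: showing $S_G$ together with the $m_{\Pi_i}$ generates all of $\bar S_G$. Here I would take an arbitrary $v \in \bar S_G$, so $v$ lies in $\cone(S_G)\cap L$, and argue by induction on some positive measure of $v$ (e.g.\ its "degree", the sum of coordinates of a positive representation, or the number of edge-generators needed in a fractional representation) that $v$ can be written as an element of $S_G$ plus a nonnegative combination of the $m_{\Pi_i}$. The key structural lemma needed is a characterization of the "holes" of $S_G$ — lattice points in the cone but not in the semigroup — in terms of $G$: I would argue that any such hole forces the existence of an exceptional pair of cycles, by translating a fractional/non-integral representation of $v$ into a closed walk in $G$ and then doing a combinatorial surgery (splitting the walk at repeated vertices, using the generalized odd cycle condition's contrapositive, Theorems 3 and 5) to locate two disjoint odd-signed cycles joined by a path with no generalized alternating path between them. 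Subtracting the corresponding $m_\Pi$ strictly decreases the measure while keeping us in $\bar S_G$, and the induction closes. I expect this surgery step — converting analytic data (a non-integral cone representation) into the combinatorial exceptional configuration, with the sign/orientation cancellations handled correctly — to be the technical heart of the argument and the place where the bulk of the work (and the earlier structural lemmas of the paper) is invoked.
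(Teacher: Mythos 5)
Your overall strategy---identify the normalization with the saturation $T_1=\cone(\PG)\cap\mathcal{L}_G$ of the edge semigroup $T_2$, show $2m_\Pi\in T_2$ for each exceptional pair, and then decompose an arbitrary element of $T_1$ into an element of $T_2$ plus contributions from exceptional pairs---is the same as the paper's, so the plan is sound; the differences lie in the two places you explicitly defer. First, for the directed edges the paper does not work directly with generalized alternating paths in $G$: it subdivides each directed edge $(i,j)$ into the pair $-it_{(i,j)}$, $+t_{(i,j)}j$ through an artificial vertex to form the augmented signed graph $\widetilde{G}$ (Definition \ref{def:AugmentedGraph}), proves the normalization statement for signed graphs only (Theorem \ref{thm:Normalization}), and transports it back via $k[G]=k[\widetilde{G}]\cap k[x_1^{\pm1},\dots,x_n^{\pm1}]$ together with the observation that every artificial vertex has zero exponent in every $M_\Pi$ (Lemma \ref{lem:Equality}, Theorem \ref{thm:mixedNormalization}); this buys the mixed case essentially for free, whereas your direct route would force you to redo all the structural lemmas with the sign/orientation bookkeeping of Definition \ref{def:genaltpath}. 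Second, the ``surgery'' you anticipate is realized in the paper not as an induction subtracting one $m_\Pi$ at a time but as a single decomposition: write $\alpha=\lfloor\alpha\rfloor+\beta$, use reducing closed walks (Lemma \ref{lem:reducing}, Corollary \ref{cor:reduction1}) to push the support of a positive representation onto trees and unicyclic graphs, conclude that $\beta$ is supported on disjoint odd cycles with all weights equal to $\tfrac12$ (Lemma \ref{lem:oddcyclesonly}), show by a degree-parity argument that each component carries an even number of such cycles, and pair them up, with non-exceptional pairs contributing elements of $k[G]$ by Lemma \ref{lem:keyproduct} and exceptional pairs contributing exactly the $M_\Pi$'s. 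Your subtract-and-induct scheme would need precisely Lemma \ref{lem:oddcyclesonly} to guarantee that removing an $m_\Pi$ keeps you in the cone, and the evenness/pairing step to terminate, so the deferred ``technical heart'' is genuinely where the content lives---but nothing in your outline would fail once those lemmas are supplied.
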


\subsection{Outline of Paper}

The remainder of this paper is organized as follows:  In Section \ref{sec:BackAndNote}, we provide the background and notation used throughout the paper.  In Section \ref{sec:Algebraic}, we prove our main theorems for the edge ring of a signed graph.  In particular, we characterize when $k[G]$ is normal and provide the normalization when it is not normal.  In Section \ref{sec:MixedGT}, we extend the results of Section \ref{sec:Algebraic} to graphs with signed edges and directed edges.  This completes the characterization of normality for all quadratic-monomial generated domains.  Finally, we conclude in Section \ref{sec:conclusion}.

\section{Background and Notation}
\label{sec:BackAndNote}
In this section, we recall notation, definitions, and results from graph theory, semigroup theory, and edge ring theory for use in this paper.  Our notation for edge rings follows the notation of Hibi and Ohsugi \cite{OhsugiHibi:1998}.

\subsection{Graph Theory}
\label{sec:GT}

One main object of study in this paper is a signed graph.
We give the basic definitions for a signed graph in this section.

\begin{definition}\label{def:signedGraph}
A {\em signed graph} $(G,\sgn)$ is an undirected graph $G=(V,E)$ and a {\em sign function} $\sgn:E\rightarrow \{-1,+1\}$ where $\sgn(e)$ denotes the {\em sign} of the edge $e\in E$.  For notational convenience, an edge $ij$ with $\sgn(ij)=+1$ is denoted $+ij$, and an edge $ij$ with $\sgn(ij)=-1$ is denoted $-ij$.
We omit the sign when it is understood from context.
\end{definition}

\begin{definition}\label{def:components}
Let $G$ be a signed graph.
$H$ is a {\em component} of $G$ if it is a maximal connected subgraph.
We say a component $H$ is a {\em bipartite component} if the vertices of $H$ can be partitioned into two sets $L$ and $R$ so that all the edges of $H$ have exactly one vertex in $L$ and one vertex in $R$.
$G$ is {\em bipartite} if each component is a bipartite component.
\end{definition}

The construction of a ring from a graph proceeds by first constructing a semigroup from the graph.
We now define the map used in this construction.

\begin{definition}[cf {\cite{OhsugiHibi:1998}}]\label{def:edgepolytope}
Let $G$ be a signed graph with $d$ vertices, possibly with loops, and without multiple edges.
Define a map $\rho:E(G)\rightarrow \R^d$ as $\rho(e)=\sgn(e)(e_i + e_j)$ where $e=+ij$ or $e=-ij$ is an edge of the graph.
When $e$ is a loop, $i=j$ and $\rho(ii)=2\sgn(ii)e_i$.
Let $\rho(E(G))$ be the image of $E(G)$ and define the {\em edge polytope of $G$} as $\mathcal{P}_G\coloneqq\conv\{\rho(E(G))\}$.
\end{definition}

Observe that in \cite{OhsugiHibi:1998}, the authors do not consider signed graphs, and, hence, all edges $e$ in $G$ have positive sign, i.e., $\rho(e) = e_i + e_j$.

\subsection{Semigroups}\label{sec:SemiGPs}

The rings considered in this paper are constructed as semigroup rings.  In this section, we recall the definition and pertinent properties of affine semigroups, see, for example, \cite{CoxLittleSchenck-ToricVarieties,BrunsHerzog-CohenMacaulay} for more details.

\begin{definition}\label{def:affineSemigp}
An {\em affine semigroup} $C$ is a finitely generated semigroup containing zero which, for some $n$, is isomorphic to a subsemigroup of $\Z^n$.
Over a field $k$, the {\em affine semigroup ring} $k[C]$ of $C$ is the ring generated by the elements of $\{x_1^{c_1}\dots x_n^{c_n}:(c_1,\dots,c_n)\in C\}$.  We often use multi-index notation for clarity, where $x^c=x_1^{c_1}\dots x_n^{c_n}$ with $c=(c_1,\dots,c_n)$.
\end{definition}

For the remainder of this section, we assume that $C$ is an affine subsemigroup of $\Z^n$.
Let $\Z C$ denote the smallest subgroup of $\Z^n$ that contains $C$, and denote $\R_+ C\coloneqq\R_+\otimes_{\Z_+}C$ that is, the elements of $\R_+ C$ consist of a finite positive linear combinations of elements of $C$.

\begin{definition}\label{def:normalSemigp}
An affine semigroup $C$ is {\em normal} if it satisfies the following condition:
if $mz\in C$ for some $z\in \Z C$ and $m$ a positive integer, then $z\in C$.
\end{definition}

Normality can be interpreted geometrically as follows: suppose that $mz\in C$.  If the semigroup is normal, then every point of $\Z^n$ that lies on the line segment between the origin and $mz$ is either in both $C$ and $\Z C$ or neither.  In fact, normality can be determined by studying the interaction between $\R_+ C$ and $\Z C$.  Gordan's lemma gives characterizations of and connections between the normality of an affine semigroup and its semigroup ring.

\begin{proposition}[Gordan's Lemma {\cite[Proposition~6.1.2 and Theorem~6.1.4]{BrunsHerzog-CohenMacaulay}}]\leavevmode\label{prop:Gordan's}
\begin{enumerate}
\item[a)] If $C$ is a normal semigroup, then $C=\Z C\cap \R_+ C$.
\item[b)] Let $G$ be a finitely generated subgroup of $\Q^n$ and $D$ a finitely generated rational polyhedral cone in $\R^n$.
Then, $G\cap D$ is a normal semigroup.
\item[c)] Let $C$ be an affine semigroup and $k$ a field. $C$ is a normal semigroup if and only if $k[C]$ is a normal domain.
\end{enumerate}
\end{proposition}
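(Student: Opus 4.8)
The plan is to prove the three parts in order, using (a) inside (b) and (c) and (b) inside (c). Throughout write $\bar C := \Z C\cap\R_+ C$ for a fixed affine semigroup $C\subseteq\Z^n$.

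\textit{Part (a).} Assume $C$ is normal. The inclusion $C\subseteq\Z C\cap\R_+ C$ is immediate. For the reverse, take $z\in\Z C\cap\R_+ C$. Since $C$ is finitely generated, $\R_+ C$ is a rational polyhedral cone and the rational point $z$ can be written as a \emph{rational} nonnegative combination of the generators of $C$; clearing denominators yields a positive integer $m$ with $mz\in C$. As $z\in\Z C$, normality of $C$ forces $z\in C$.

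\textit{Part (b).} First reduce to a lattice: a finitely generated subgroup $G\subseteq\Q^n$ is free of some rank $m$, and a choice of $\Z$-basis gives a $\Q$-linear isomorphism of $\Q^m$ onto the $\Q$-span of $G$ that carries $\Z^m$ onto $G$ and carries $D$ intersected with the $\R$-span of $G$ — itself a rational polyhedral cone — onto a rational polyhedral cone $D'\subseteq\R^m$. So it suffices to treat $\Z^m\cap D'$. \emph{Finite generation} is the classical Gordan argument: writing $D'=\cone(v_1,\dots,v_r)$ with $v_i\in\Z^m$, the fundamental parallelepiped $K=\{\sum t_i v_i:0\le t_i\le 1\}$ is bounded, so $K\cap\Z^m$ is finite, and any $z=\sum t_i v_i\in\Z^m\cap D'$ decomposes as $z=\sum\lfloor t_i\rfloor v_i+\sum(t_i-\lfloor t_i\rfloor)v_i$, exhibiting $z$ as a nonnegative integer combination of the finite set $\{v_1,\dots,v_r\}\cup(K\cap\Z^m)$. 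Since $\Z^m\cap D'$ contains $0$ and embeds in $\Z^m$, it is an affine semigroup; it is \emph{normal} because if $w\in\Z(\Z^m\cap D')$ and $mw\in\Z^m\cap D'$, then $w\in\Z^m$ automatically and $w=\tfrac1m(mw)\in D'$ as $D'$ is a cone.

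\textit{Part (c).} Since $C\subseteq\Z^n$, we have $k[C]\subseteq k[\Z^n]=k[x_1^{\pm 1},\dots,x_n^{\pm 1}]$, so $k[C]$ is always a domain, with fraction field $k(\Z C)$ (it contains $x^c$ and $x^{-c}$ for each generator $c$ of $C$). If $C$ is \emph{not} normal, part (a) produces $z\in(\Z C\cap\R_+ C)\setminus C$; as in (a) some power $(x^z)^m=x^{mz}$ lies in $k[C]$, so $x^z\in k(\Z C)$ is integral over $k[C]$ yet not in $k[C]$, and $k[C]$ is not integrally closed. Conversely, if $C$ is normal, then after replacing $\Z^n$ by a $\Z$-basis of $\Z C$ we may assume $\Z C=\Z^n$, so $C=\Z^n\cap\R_+ C$ with $\R_+ C=\bigcap_{j=1}^r H_j^+$ a finite intersection of rational closed half-spaces through the origin; hence $k[C]=\bigcap_j k[\Z^n\cap H_j^+]$ inside $k[\Z^n]$, since membership in a monomial subring is detected exponent by exponent. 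After a $\mathrm{GL}_n(\Z)$ change of coordinates each $k[\Z^n\cap H_j^+]$ becomes $k[x_1,x_2^{\pm 1},\dots,x_n^{\pm 1}]$, a localization of a polynomial ring, hence normal with fraction field $k(\Z^n)$; a finite intersection of normal subrings of a field sharing a common fraction field is normal, so $k[C]$ is normal. Combined with part (a) applied to the normal ring case, this closes the equivalence.

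\textit{Main obstacle.} The only substantive steps are the finite-generation claim in (b) — the fundamental-parallelepiped argument — and the representation of $k[C]$ as a finite intersection of localized polynomial rings in (c); the rest is bookkeeping with cones and lattices.
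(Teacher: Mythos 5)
The paper does not actually prove this proposition---it is imported directly from Bruns--Herzog (Proposition~6.1.2 and Theorem~6.1.4)---so there is no in-paper argument to compare against; your proof is the standard textbook one (fundamental parallelepiped for finite generation, half-space decomposition of the cone and intersection of localized polynomial rings for normality of $k[C]$) and is correct. The one blemish is in part (c): what produces $z\in(\Z C\cap\R_+ C)\setminus C$ from non-normality is the definition itself (a witness $z$ with $mz\in C$ and $z\in\Z C$ satisfies $z=\tfrac{1}{m}(mz)\in\R_+ C$), not part (a), which as you state it is only the forward implication.
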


\subsection{Integral Closures of Edge Rings}
\label{sec:IntClosures}

We recall the constructions of edge rings and their normalizations from \cite{OhsugiHibi:1998} and \cite{Villarreal:1998}, with slight generalizations.  Suppose that $R\subseteq k[x_1^{\pm 1},\dots,x_n^{\pm 1}]$ is a quadratic-monomial generated domain with generators\footnote{The authors of \cite{OhsugiHibi:1998} and \cite{Villarreal:1998} only consider generators with $s=1$.} of the form $(x_ix_j)^s$ with $s=\pm 1$.

From $R$, we construct the signed graph $G$ with vertex set $\{1,\dots,n\}$ and edges $s\cdot ij$ for each monomial of the form $(x_ix_j)^s$ in $R$.  Moreover, we construct the edge polytope $\PG$ as in Definition \ref{def:edgepolytope}.  From the edge polytope, we consider the following two affine semigroups:

\begin{definition}\label{def:T1T2}
Let $G$ be a signed graph, possibly with loops.  Define the {\em edge lattice} as the subgroup of $\Z^n$ generated by $\rho(E(G))$, i.e., $\mathcal{L}_G \coloneqq \Z \rho\{E(G)\} =\left\{\sum_{e\in G} z_e\rho(e)\ :\ z\in \Z\right\}$
There are two semigroups associated to $\PG$.
\begin{itemize}
\item $T_1\coloneqq\mbox{cone}(\mathcal{P}_G)\cap \mathcal{L}_G = \R_{+}\rho(E(G))\cap \Z\rho(E(G))$,
\item $T_2\coloneqq\Z_{+}\rho(E(G))$, i.e., the semigroup generated by $\rho(E(G))$.
\end{itemize}
\end{definition}

Each of these semigroups generates a semigroup ring of interest as follows:

\begin{definition}\label{def:Ehrhart}
Let $G$ be a signed graph, $T_1$ and $T_2$ as defined above, and let $\mathcal{A}(\PG)_n$ be the vector space over $k$ which is spanned by the monomials of the form $x^a$ such that $a\in n\mathcal{P}\cap \Z^d$.  We define the following subrings of $k[x_1^{\pm 1},\dots,x_d^{\pm 1}]$:
\begin{itemize}
\item the {\em Ehrhart} Polynomial ring: \[k[T_1] \coloneqq \APG=\bigoplus_{n=0}^{\infty} \mathcal{A}(\PG)_n\text{ and}\]
\item the {\em Edge ring of $G$}: \[k[T_2] = k[G] \coloneqq \langle x^a : a\in \rho(E(G)) \rangle.\]
\end{itemize}
\end{definition}

We observe that the ring $R$ can be recovered from the graph $G$ as $R=k[T_2]$.  Even though several signed graphs $G$ can generate the same ring $R$, this construction from $G$ motivates the notation $k[G]$ for the edge ring.  By construction, $\APG$ is a normal ring, and, in \cite{OhsugiHibi:1998} and \cite{Villarreal:1998}, the authors prove that when all edges have positive sign, $\APG$ is the normalization of $k[G]$.  Moreover, they explicitly describe the generators of $\APG$ as an algebra over $k[G]$ in terms of the structure of the graph $G$.

In the remainder of this paper, we extend these results in a more general setting.  In particular, we extend these results to rings where $s$ can be both $+1$ and $-1$ in Section \ref{sec:Algebraic}.  Then, we extend the results to arbitrary quadratic-monomial generated domains in Section \ref{sec:MixedGT}.

\begin{notation}\label{notation}
Throughout the remainder of this paper, ring notation or semigroup notation is used depending on which is clearer in the particular context.  We recall that the coordinates in $\Z^n$ correspond to the exponents of the $x_i$'s in the edge ring.
\end{notation}

\section{Algebraic Properties}
\label{sec:Algebraic}
In this section, we classify the normality of $k[G]$ in terms of combinatorial properties of the graph $G$.  Additionally, when $k[G]$ is not normal, we use the structure of $G$ to determine the additional elements needed to normalize $k[G]$.  This combinatorial condition is a generalization of the odd cycle condition of \cite{OhsugiHibi:1998}, but is more nuanced due to the more general class of signed graphs.  Our goal throughout this section is to use the combinatorial properties of $G$ to relate $k[G]$ to $\APG$.  In particular, we compute generators of $\APG$ over $k[G]$ in terms of properties of the graph.

\subsection{Odd Cycle Condition}
\label{sec:nonNorm}
In this section, we define the main combinatorial property, called the odd cycle condition, for signed graphs.  Moreover, we show that if a signed graph $G$ does not satisfy this property, then $k[G]$ is not normal.

\begin{example}\label{ex:nonnormal}
Let $G$ be the signed graph in Figure \ref{fig:examplenotNormal}, i.e., $G$ has vertex set $\{1,2,3\}$ and edge set $\{+11,+12,+23,-33\}$.
The monomial $x_1x_3^{-1}$ is in $\APG$, but it is not in $k[G]$.
We demonstrate this by showing $(1,0,-1)\in T_1\setminus T_2$ since the elements of the $T_i$'s represent the exponents of the variables.

To show that $(1,0,-1)\in T_1$ we give two ways of writing $(1,0,-1)$: one in $\cone(\PG)$ and the other in $\mathcal{L}_G$.  To show that $(1,0,-1)\in\cone(\PG)$, we write $(1,0,-1)$ as a nonnegative real combination of the edges of $G$:
\[(1,0,-1)=\frac{1}{2}\rho(+11)+\frac{1}{2}\rho(-33)=\frac{1}{2}(2,0,0)+\frac{1}{2}(0,0,-2).\]
Similarly, to show that $(1,0,-1)\in\mathcal{L}_G$, we write $(1,0,-1)$ as an integral combination of the edges of $G$:
\[(1,0,-1)=\rho(+12) - \rho(+23)  = (1,1,0) - (0,1,1).\]
Thus, $(1,0,-1)\in T_1$ and hence $x_1x_3^{-1}\in \APG$.

We now argue that $(1,0,-1)$ is not in $T_2$ because it cannot be written as a nonnegative integral combination of the edges of $G$.  In particular, if $(1,0,-1)$ were in $T_2$, then we could write $(1,0,-1) = \sum_{e\in G} z_e \rho(e)$ for $z_e\in \Z_{\geq 0}$.  Since the only two edges incident to vertex $1$ are $+11$ and $+12$, the coefficient of one of these must be at least one.  However, the vectors $(2,0,0)$ and $(1,1,0)$ contain coordinates that are strictly larger than the corresponding coordinates in $(1,0,-1)$ and there is no way to cancel these coordinates.  Therefore, $(1,0,-1)$ cannot be written as a nonnegative integral combination of the edges of $G$, $(1,0,-1)\not\in T_2$, and $x_1x_3^{-1}\notin k[G]$.

Observe that $(x_1x_3^{-1})^2 = (x_1^2)(x_3^{-1})^2\in k[G]$, and, hence, the monomial $z^2 - (x_1x_3^{-1})^2$ has the rational solution $x_1x_3^{-1}=\frac{x_1x_2}{x_2x_3}$.
Since $x_1x_3^{-1}$ is not in $k[G]$, is in the fraction field of $k[G]$, and is a solution to a monic polynomial with coefficients in $k[G]$, $k[G]$ is not normal.
\end{example}

\begin{figure}[hbt]
	\centering
		\begin{tikzpicture}
		\begin{scope}
		\node [draw,circle] (A) at (0,1) {1};
		\node [draw,circle] (B) at (1.5,1) {2};
		\node [draw,circle] (C) at (3,1) {3};
		
		\draw[line width=1pt] (A) edge node[above]{$+$} (B);
		\draw[line width=1pt] (B) edge node[above]{$+$} (C);
		\draw[line width=1pt] (A) edge[out=45,in=135,looseness=8] node[above]{$+$} (A);
		\draw[line width=1pt] (C) edge[out=45,in=135,looseness=8] node[above]{$-$} (C);
		
		\node at (1.5,0) {G};
		
		\end{scope}
		
		\begin{scope}[shift={(7,0))}]
		\node [draw,circle] (A) at (0,1) {1};
		\node [draw,circle] (B) at (1.5,1) {2};
		\node [draw,circle] (C) at (3,1) {3};
		
		\draw[line width=1pt] (A) edge node[above]{$+$} (B);
		\draw[line width=1pt] (B) edge node[above]{$-$} (C);
		\draw[line width=1pt] (A) edge[out=45,in=135,looseness=8] node[above]{$+$} (A);
		\draw[line width=1pt] (C) edge[out=45,in=135,looseness=8] node[above]{$-$} (C);
		
		\node at (1.5,0) {H};
		
		\end{scope}

		\end{tikzpicture}
		\caption[Example of a normal, and a non-normal edge ring of a signed graph.]{The edge ring for the graph $G$ is not normal.  In particular, $(x_1x_3^{-1})^2$ is in $k[G]$, but its square root is not in $k[G]$.  By changing the sign on the edge $23$, the edge ring for the graph $H$ is normal.  In this case, $x_1x_3^{-1}$ can be formed by observing that since the pair of edges $+12$ and $-23$ differ in signs, the corresponding powers on $x_2$ cancel in the product.
		 \label{fig:examplenotNormal}}
\end{figure}

\begin{example}\label{ex:normal}
Let $H$ be the signed graph in Figure \ref{fig:examplenotNormal}, i.e., $H$ has vertex set $\{1,2,3\}$ and edge set $\{+11,+12,-23,-33\}$.
Observe that $H$ is the same graph as $G$ from Example \ref{ex:nonnormal}, except that edge $+23$ has been replaced by $-23$.  In this case, we can see that $(1,0,-1)\in T_1$ by a similar set of coefficients as in Example \ref{ex:nonnormal}.  On the other hand, $(1,0,-1)\in T_2$ since $(1,0,-1)$ can be written as a nonnegative integral combination of the edges.
\[(1,0,-1)=\rho(+12)+\rho(-23).\]
Therefore, $x_1x_3^{-1}$ is not a witness to non-normality.  In fact, $k[H]$ is normal.
\end{example}

Examples \ref{ex:nonnormal} and \ref{ex:normal} illustrate a subtlety in detecting normality in the case of a signed graph since the normality depends not only on the structure of the graph, but also the signs of the edges.  These examples lead to the odd cycle condition for signed graphs:

\begin{definition}\label{def:OCC}
Let $G$ be a signed graph, we say that $G$ satisfies the {\em odd cycle condition} if for every pair of odd cycles $C$ and $C'$ of $G$, at least one of the following occurs: 
\begin{enumerate}
\item $C$ and $C'$ are in distinct components,
\item $C$ and $C'$ have a vertex in common,
\item $C$ and $C'$ have a sign alternating $i,j$-path in $G$, where $i$ is a vertex of $C$ and $j$ is a vertex of $C'$.
\end{enumerate}
\end{definition}

Note that the definition of the odd cycle condition extends the definition from \cite{OhsugiHibi:1998} and \cite{Villarreal:1998}.  In particular, since the graphs considered by Hibi and Ohsugi are connected and they have only positive edges, the only possible alternating paths are of length one.  By restricting to that case, the odd cycle condition of \cite{OhsugiHibi:1998} follows.  For connected graphs with all positive edges, \cite{OhsugiHibi:1998} shows that the odd cycle condition is equivalent to normality of the edge ring $k[G]$.  We now begin the generalization of this proof to the case of a disconnected signed graph by showing that the odd cycle condition is a necessary condition on $G$ for $k[G]$ to be normal.

\begin{definition}\label{def:signature}
Let $G$ be a signed graph, $C$ be a cycle in $G$, and $\{u_1,\dots,u_n\}$ be the vertices of $C$.
We define the {\em signature} of vertex $u_i$ in $C$ as the average of the signs of the edges incident to $i$ in $C$, i.e., $\sig_C(u_i)=\frac{1}{2}(\sgn(u_{i-1}u_i)+\sgn(u_iu_{i+1}))$ where the subscripts are computed modulo $n$.  Observe that the signature of a vertex is one of $1$, $0$, or $-1$.
\end{definition} 

\begin{observation}\label{obs:alternating}
Let $G$ be a signed graph and suppose that $W$ is an alternating walk with vertices $\{i_0,\dots,i_n\}$ and edges $\{i_0i_1,\dots,i_{n-1}i_n\}$.  Then,
\[
\prod_{j=1}^n(x_{i_{j-1}}x_{i_j})^{\sgn(i_{j-1}i_j)}=x_{i_0}^{\sgn(i_0i_1)}x_{i_n}^{\sgn(i_{n-1}i_n)}.
\]
The intermediate variables cancel because each $x_{i_j}$, with $j\not\in\{0,n\}$, appears in two factors with opposite signs.  Moreover, if $G$ contains a cycle $C$, $W\subseteq C$, and $i_0$ and $i_n$ have nonzero signature in $C$, then, $\sgn(i_0i_1)=\sig_C(i_0)$ and $\sgn(i_{n-1}i_n)=\sig_C(i_n)$, so, in this case, the product equals $x_0^{\sig_C(i_0)}x_n^{\sig_C(i_n)}$.
\end{observation}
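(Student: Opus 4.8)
The plan is to prove the identity by a direct telescoping computation and then deduce the ``moreover'' clause from the definition of signature.

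First I would expand the left-hand product into powers of the individual variables and track the total exponent of each $x_{i_j}$. The $j$-th factor $(x_{i_{j-1}}x_{i_j})^{\sgn(i_{j-1}i_j)}$ contributes the exponent $\sgn(i_{j-1}i_j)$ to $x_{i_{j-1}}$ and the same exponent to $x_{i_j}$, and it is the only factor involving the edge $i_{j-1}i_j$. Hence an interior vertex $i_j$ with $1\le j\le n-1$ receives the exponent $\sgn(i_{j-1}i_j)$ from the $j$-th factor and the exponent $\sgn(i_ji_{j+1})$ from the $(j+1)$-st factor, and nothing else. Since $W$ is an alternating walk, consecutive edges have opposite signs, so $\sgn(i_{j-1}i_j)+\sgn(i_ji_{j+1})=0$ and the total exponent of $x_{i_j}$ vanishes. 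The endpoint $x_{i_0}$ occurs only in the first factor, contributing $\sgn(i_0i_1)$, and $x_{i_n}$ occurs only in the last factor, contributing $\sgn(i_{n-1}i_n)$; this gives $\prod_{j=1}^n(x_{i_{j-1}}x_{i_j})^{\sgn(i_{j-1}i_j)}=x_{i_0}^{\sgn(i_0i_1)}x_{i_n}^{\sgn(i_{n-1}i_n)}$, which is the first claim together with its stated cancellation reason.

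For the second assertion, suppose $W\subseteq C$ for a cycle $C$ and that $i_0$ has nonzero signature in $C$. By Definition \ref{def:signature}, $\sig_C(i_0)$ is the average of the signs of the two edges of $C$ incident to $i_0$; an average in $\{-1,0,1\}$ that is nonzero forces those two signs to agree, and then $\sig_C(i_0)$ equals their common value. Since $i_0i_1$ is the first edge of $W$ and $W\subseteq C$, the edge $i_0i_1$ is one of the two edges of $C$ at $i_0$, so $\sgn(i_0i_1)=\sig_C(i_0)$. Applying the same argument at $i_n$ gives $\sgn(i_{n-1}i_n)=\sig_C(i_n)$, and substituting into the identity just proved yields $x_{i_0}^{\sig_C(i_0)}x_{i_n}^{\sig_C(i_n)}$.

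There is no real obstacle here: the computation is essentially a one-line telescoping. The only points needing a little care are making explicit that ``alternating'' means that consecutive edges of $W$ carry opposite signs (so that the interior cancellations occur), and observing that in the intended setting $W$ is a subpath of a cycle, so each vertex is visited at most once and the exponent bookkeeping above is exactly as described; for a general walk that revisits a vertex one would instead sum the contributions over all occurrences.
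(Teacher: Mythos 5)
Your proof is correct and follows essentially the same argument the paper gives (the paper justifies this Observation inline by the same per-occurrence cancellation of interior vertices, and the ``moreover'' clause by the same reading of Definition \ref{def:signature}). Your added remarks --- that ``alternating'' means consecutive edges carry opposite signs, and that for a walk revisiting a vertex one sums contributions over occurrences, each of which still cancels --- are accurate clarifications rather than deviations.
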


Throughout the remainder of this section, we fix a signed graph $G$ which does not satisfy the odd cycle condition.  Since $G$ does not satisfy the odd cycle condition, $G$ contains cycles $C$ and $C'$ that violate the odd cycle condition, i.e., $C$ and $C$' are disjoint, are in the same component, and do not have an alternating path between them.  We prove that the following monomial is a witness to the fact that $k[G]$ is not normal.
\begin{equation}\label{eq:goalproduct}
\prod_{\ell\in C}x_{\ell}^{\sig_C(\ell)}\prod_{\ell\in C'}x_{\ell}^{\sig_{C'}(\ell)}
\end{equation} 

\begin{lemma}\label{lem:converse:Ehrhart}
Let $G$ be a signed graph, which does not satisfy the odd cycle condition, then Expression (\ref{eq:goalproduct}) is in $\APG$.
\end{lemma}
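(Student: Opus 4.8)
The plan is to show that the monomial in Expression~(\ref{eq:goalproduct}) lies in $k[T_1] = \APG$ by exhibiting it in both $\cone(\PG)$ and the edge lattice $\mathcal{L}_G$, and then invoking Proposition~\ref{prop:Gordan's}(a), since $\APG = T_1 = \cone(\PG)\cap\mathcal{L}_G$ by construction and normality of the Ehrhart ring. Writing $a$ for the exponent vector of Expression~(\ref{eq:goalproduct}), i.e. $a = \sum_{\ell\in C}\sig_C(\ell)e_\ell + \sum_{\ell\in C'}\sig_{C'}(\ell)e_\ell$, the two tasks are: (1) write $a$ as a nonnegative real (indeed rational) combination of the $\rho(e)$; (2) write $a$ as an integral combination of the $\rho(e)$.

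For task~(1), I would handle each cycle separately. Given a cycle $C$ with vertices $u_1,\dots,u_m$ and edges $u_iu_{i+1}$, I claim $\sum_{i}\tfrac12\,\rho(u_iu_{i+1}) = \sum_i \tfrac12\sgn(u_iu_{i+1})(e_{u_i}+e_{u_{i+1}})$ collects, at each vertex $u_i$, the coefficient $\tfrac12(\sgn(u_{i-1}u_i)+\sgn(u_iu_{i+1})) = \sig_C(u_i)$. Hence $\sum_{\ell\in C}\sig_C(\ell)e_\ell = \sum_{e\in C}\tfrac12\rho(e)$, a nonnegative rational combination of edges of $C$ (the coefficients are all $\tfrac12 > 0$). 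Doing the same for $C'$ and adding gives $a = \sum_{e\in C}\tfrac12\rho(e) + \sum_{e\in C'}\tfrac12\rho(e) \in \cone(\PG)$. One subtlety to address: if $C$ is an even cycle, this still works, but the relevant cycles here have odd signature structure; I should note that the identity $\sum_{e\in C}\tfrac12\rho(e) = \sum_\ell \sig_C(\ell)e_\ell$ is purely formal and holds for any cycle, so no case analysis is needed for this part.

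Task~(2) is where the hypothesis that $G$ fails the odd cycle condition gets used, and I expect it to be the main obstacle. Since $C$ and $C'$ lie in the same component, there is a path $P$ from a vertex $i\in C$ to a vertex $j\in C'$; by taking a shortest such path we may assume $P$ is internally disjoint from $C\cup C'$. The idea is to express $\sum_{\ell\in C}\sig_C(\ell)e_\ell$ as an integral combination of edges of $C$ using the fact that an odd cycle (odd number of signed edges, in the generalized sense — here, a cycle whose edge-signs multiply in a way making it "odd") admits an integral relation concentrating weight appropriately; similarly for $C'$; and then use the path $P$ together with Observation~\ref{obs:alternating} (alternating walks telescope) to transport the contributions. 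Concretely, walking along $C$ and taking alternating $\pm$ combinations of consecutive edges should produce, up to sign, $2e_i$ plus a multiple that cancels against a similar expression for $C'$; the path $P$, traversed with alternating signs, contributes $x_i^{\pm1}x_j^{\pm1}$-type terms (per Observation~\ref{obs:alternating}) that glue the two cycle-expressions into exactly $a$. The delicate points are: keeping track of the signatures $\sig_C(i)$ and $\sig_{C'}(j)$ versus the signs at the ends of $P$, and verifying that the total integral combination has no leftover terms at interior vertices of $P$ or at the non-endpoint vertices of the cycles. I would organize this by first proving an auxiliary identity of the form "for an odd cycle $C$ and vertex $i\in C$, the vector $2e_i$ (or $\sum_\ell\sig_C(\ell)e_\ell$ adjusted by $e_i$) is an integral combination of $\rho(E(C))$," then combining two such identities with the telescoping path identity. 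Once $a\in\cone(\PG)\cap\mathcal{L}_G = T_1$, Proposition~\ref{prop:Gordan's}(a) and Definition~\ref{def:Ehrhart} give $x^a\in\APG$, completing the proof.
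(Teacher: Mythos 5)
Your overall strategy is the same as the paper's: membership in $\cone(\PG)$ via the weight-$\tfrac12$ assignment on all edges of $C\cup C'$ (your task~(1) is complete and correct), and membership in $\mathcal{L}_G$ via a connecting path plus per-cycle integral decompositions. However, the lattice half is only an outline, and the two places where the signed-graph setting actually bites are flagged as ``delicate'' but not resolved, so there is a genuine gap.

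First, the connecting walk. Traversing a path $P$ from $i\in C$ to $j\in C'$ with weights $a_\ell=\pm1$ chosen so that $a_\ell\sgn(i_{\ell-1}i_\ell)$ alternates forces the endpoint contribution at $j$ to be $(-1)^{n-1}$ times the contribution at $i$, where $n$ is the length of the walk. You can always arrange the $i$-end to be $\sig_C(i)e_i$, but the $j$-end is then $(-1)^{n-1}\sig_C(i)e_j$, which need not equal $\sig_{C'}(j)e_j$. A shortest path gives you no control over this parity; the fix (used in the paper) is to replace $P$ by the walk $P\cup C$ when the parity is wrong, which works precisely because $C$ is odd. Your proposal never adjusts the parity, so as written the glued expression can come out as $\sig_C(i)e_i-\sig_{C'}(j)e_j$ instead of $\sig_C(i)e_i+\sig_{C'}(j)e_j$. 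Note also that you must first argue that vertices $i$, $j$ of \emph{nonzero} signature exist on each cycle; this follows from a parity count using Equation~(\ref{eq:rewrigingsumP1}) and oddness of the cycle, and is needed before any of this starts. Second, the per-cycle residue. Your auxiliary identity in the form ``$2e_i$ is an integral combination of $\rho(E(C))$'' is the unsigned-case statement and does not by itself yield integrality of $\sum_{\ell\in C}\sig_C(\ell)e_\ell+\sum_{\ell\in C'}\sig_{C'}(\ell)e_\ell$ (halving $2e_i$ is not allowed in $\mathcal{L}_G$). The version you need is the parenthetical one: $\sum_{\ell\in C\setminus i}\sig_C(\ell)e_\ell$ is an integral (in fact $0/1$-weight) combination of $\rho(E(C))$. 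Proving this requires showing that $C\setminus i$ contains an \emph{even} number of vertices of nonzero signature (again a parity computation as in Equation~(\ref{eq:rewritingsum})) and then pairing consecutive such vertices by alternating subpaths of $C$, to which Observation~\ref{obs:alternating} applies. Without these two arguments the integral decomposition is not established.
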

\begin{proof}
To prove this statement, we write the monomial in Expression (\ref{eq:goalproduct}) as both a positive real combination and an integral combination of the images of the edges of $E$ under $\rho$.

First, observe that this product can be constructed by putting the weight of $\frac{1}{2}$ on all the edges of $C\cup C'$.  In particular,
\begin{equation}\label{eq:APG}
\prod_{\pm k\ell\in C\cup C'}(x_kx_\ell)^{\frac{1}{2}\sgn(k\ell)}=\prod_{\ell\in C}x_{\ell}^{\sig_C(\ell)}\prod_{\ell\in C'}x_{\ell}^{\sig_{C'}(\ell)}.
\end{equation}
Since all the exponents (weights) are positive real numbers, Expression (\ref{eq:goalproduct}) is in $k[\cone(\PG)]$.

Since $C$ and $C'$ are odd cycles, by a parity argument, there exists vertices $i$ in $C$ and $j$ in $C'$ with nonzero signature.
Since $C$ and $C'$ are in the same component, let $P$ be any path between $i$ and $j$.  We choose a walk $W$ in $G$ between $i$ and $j$ with the following property: if $\sig_C(i)=\sig_{C'}(j)$, then $W$ has odd length, and if $\sig_C(i)=-\sig_{C'}(j)$, then $W$ has even length.
Such a walk can be constructed by considering $W=P$ or $W=P\cup C$, depending on the parity of $P$.

Suppose that the vertices of $W$ are $\{i=i_0,\dots,i_n=j\}$ and the edges of $W$ are $\{i_0i_1,i_1i_2,\dots$ $,i_{n-1}i_n\}$ (possibly with repeated vertices or edges).
Let $a_1,\dots,a_n$ be $\pm 1$ so that $a_\ell\sgn(i_{\ell-i}i_\ell)=(-1)^{\ell-1}\sig_C(i)$ for $1\leq \ell\leq n$.
In other words, $a_1\sgn(i_0i_1)=\sig_C(i)$, $a_2\sgn(i_1i_2)=-\sig_C(i)$, and, due to choice of the length of the walk, $a_{n}\sgn(i_{n-1}i_n)=(-1)^n\sig_{C}(i)=\sig_{C'}(j)$. 
Therefore, while the signs on $W$ may not alternate, the products $a_\ell\sgn(i_{\ell-1}i_\ell)$ alternate.  Therefore, in the following product, the intermediate terms cancel by Observation \ref{obs:alternating}:
\begin{equation}\label{eq:connectingproduct}
\prod_{\ell=1}^{n} (x_{i_{\ell-1}}x_{i_{\ell}})^{a_\ell\sgn_C(i_{l-1}i_l)}=x_i^{\sig_C(i)}x_j^{\sig_{C'}(j)}.
\end{equation}

Consider $C\setminus i$, which is a path with an even number of vertices.  Let $\{u_1,\dots,u_{2m}\}$ be the vertices of $C\setminus i$, in order.  We now construct the product
\begin{equation}\label{eq:productwithouti}
\prod_{\ell\in C\setminus i}x_{\ell}^{\sig_C(\ell)}.
\end{equation}
Observe that, since the signature is a weighted sum of the signed edges, we can rearrange a sum of signatures into a sum over edges as follows:
\begin{equation}\label{eq:rewrigingsumP1}
\sum_{\ell\in C} \sig_C(\ell) = \sum_{\ell \in C}\frac{1}{2}\sum_{\substack{j\\\pm j\ell\in C}}\sgn(j\ell) = \sum_{e\in C} \sgn(e).
\end{equation}
Since $\sig_C(i) = \frac{1}{2}\left(\sum_{\pm ji\in C} \sgn(ij)\right)$, we can write a similar equation for $C\setminus i$:
\begin{equation}\label{eq:rewritingsum}
\sum_{\ell\in C\setminus i}\sig_C(\ell)=\frac{1}{2}\sum_{\substack{j\\\pm ij\in C}} \sgn(ij)+\sum_{e\in C\setminus i} \sgn(e)
=\sig_C(i)+\sum_{k=1}^{2m-1}\sgn(u_ku_{k+1}).
\end{equation}

Equation (\ref{eq:rewritingsum}) may be justified as follows: every edge $e\in C$ (except those incident to $i$) appears twice with weight $\frac{1}{2}$ in the LHS of Equation (\ref{eq:rewritingsum}).  Additionally, the edges incident to $i$ appear once, with weight $\frac{1}{2}$ from the signature of each of the neighbors of $i$.

Since the right-hand-side of Equation (\ref{eq:rewritingsum}) contains an even number of terms which are all equal to $\pm 1$, we know that both sides of the equation are even.
Hence, using a parity argument, we see that there are an even number of vertices in $C\setminus i$ with nonzero signature.
Let $\{u_{j_1},\dots,u_{j_{2p}}\}$ be the vertices of $C\setminus i$ with nonzero signature, in order around $C$.
Then, there are disjoint alternating paths $\{W_1,\dots,W_p\}$ such that $W_k$ ends at $u_{j_{2k-1}}$ and $u_{j_{2k}}$; these paths are alternating because otherwise there would be a vertex with nonzero signature between $u_{j_{2k-1}}$ and $u_{j_{2k}}$.  For each $W_k$, we assign an edge weight of $1$ on the edges of $W_k$ so that the corresponding ring element is $x_{j_{2k-1}}^{\sig_C(u_{j_{2k-1}})}x_{j_{2k}}^{\sig_C(u_{j_{2k}})}$ by Observation \ref{obs:alternating}.  By multiplying these products over all paths, the product is Expression (\ref{eq:productwithouti}).  Similarly, we can construct $\prod_{\ell\in C'\setminus j}x_{\ell}^{\sig_{C'}(\ell)}$.  By multiplying Equation (\ref{eq:connectingproduct}) with Expression (\ref{eq:productwithouti}) as well as the corresponding expression for $C'\setminus j$, the result is 
\begin{equation*}%\label{eq:nonnormalproduct}
\left(x_i^{\sig_C(i)}x_j^{\sig_{C'}(j)}\right)\prod_{\ell\in C\setminus i}x_{\ell}^{\sig_C(\ell)}\prod_{\ell\in C'\setminus j}x_{\ell}^{\sig_{C'}(\ell)}=\prod_{\ell\in C}x_{\ell}^{\sig_C(\ell)}\prod_{\ell\in C'}x_{\ell}^{\sig_{C'}(\ell)}.
\end{equation*}
Therefore, the exponents in Expression (\ref{eq:goalproduct}) can be written as an integral combination of $\rho(e)$ for $e\in G$, i.e., Expression (\ref{eq:goalproduct}) is in $k[\mathcal{L}_G]$.  Thus, Expression (\ref{eq:goalproduct}) is in $\APG$.
\end{proof}

\begin{lemma}\label{lem:converse:Edge}
Let $G$ be a signed graph, which does not satisfy the odd cycle condition, then Expression (\ref{eq:goalproduct}) is not in $k[G]$
\end{lemma}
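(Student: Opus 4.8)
It is enough to show the purely combinatorial statement that the exponent vector
\[
v\;:=\;\sum_{\ell\in C}\sig_C(\ell)\,e_\ell+\sum_{\ell\in C'}\sig_{C'}(\ell)\,e_\ell
\]
of Expression~(\ref{eq:goalproduct}) does not lie in $T_2=\Z_{+}\rho(E(G))$: since $k[G]=k[T_2]$ and distinct monomials of the Laurent ring are $k$-linearly independent, $x^v\in k[G]$ exactly when $v\in T_2$. The plan is to assume $v=\sum_{e\in E(G)}z_e\,\rho(e)$ with all $z_e\in\Z_{\ge 0}$ and derive a contradiction. Recall that $C$ and $C'$ were chosen disjoint and inside a single component, so among the three escape clauses of Definition~\ref{def:OCC} the only one not already excluded for the pair $(C,C')$ is the existence of a sign-alternating $i,j$-path with $i\in V(C)$ and $j\in V(C')$; constructing such a path from the assumed representation is the contradiction I aim for.

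To isolate the mechanism, first recall the special case of \cite{OhsugiHibi:1998}, where every edge is positive. Then each coordinate of each $\rho(e)$ is non-negative, so in $v=\sum z_e\rho(e)$ no edge of positive multiplicity can be incident to a vertex outside $V(C)\cup V(C')$, and the edges used with positive multiplicity restrict to a perfect matching of $V(C)$ by edges of $G$ (each vertex of $C$ has signature $1$, hence is covered with total multiplicity $1$). As $|V(C)|$ is odd and, in the absence of an alternating path — there, a bridge — there is no edge of $G$ joining $V(C)$ to $V(C')$, this is impossible.

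In the signed setting the obstruction should be the same in spirit, but the cancellation permitted by negative edges must be tracked: an edge of the multiset $M=\{e\text{ with multiplicity }z_e\}$ may now leave $V(C)$, travel through the rest of the component, and return. The plan is to follow this cancellation graph-theoretically. Using Equation~(\ref{eq:APG}), which gives $\sum_{e\in E(C)\cup E(C')}\rho(e)=2v$, the hypothesis becomes a vanishing integral combination $\sum_{e\in E(C)\cup E(C')}\rho(e)-2\sum_{e}z_e\rho(e)=0$. I would read this as a balanced signed multigraph — every vertex incident to equally many positively and negatively signed ends, a loop counted as two ends — decompose it into sign-alternating closed walks, and then trace the edges of $C$ through this decomposition. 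Because alternating stretches translate, via Observation~\ref{obs:alternating}, into alternating walks of $G$, and because $|E(C)|$ is odd, the edges of $C$ cannot all be absorbed into closed walks confined to the complement of $V(C')$: one of the walks must cross to $V(C')$, yielding a sign-alternating $V(C)$–$V(C')$ walk. Shortening this walk — splicing out even-length closed sub-walks and disposing of the odd-length ones, which are themselves odd cycles of $G$, by a separate exchange — produces the forbidden path.

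The combinatorial core — proving that the odd cycle $C$ cannot be resolved within the complement of $V(C')$, i.e.\ that one of the alternating closed walks is forced to run from $C$ to $C'$ — is what I expect to be the main obstacle, together with reconciling the two notions of sign in play: the signs of $G$ and the flipped signs the vanishing combination imposes on the reversed edges. The parity that should drive the proof is precisely the oddness of $C$: the vectors $\{\rho(e):e\in E(C)\}$ span a sublattice of index two in $\Z^{V(C)}$ that does not contain $\sum_{\ell\in C}\sig_C(\ell)e_\ell$, whereas $2\sum_{\ell\in C}\sig_C(\ell)e_\ell=\sum_{e\in E(C)}\rho(e)$ does lie in it; this "half-integrality" cannot be manufactured locally and so, I expect, must be paid for by a path to $C'$. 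Making the bookkeeping of loops, multiplicities, and the walk-to-path reduction precise is where the work lies, and it both parallels and properly extends the matching argument of \cite{OhsugiHibi:1998}.
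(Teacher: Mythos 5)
Your strategy---decompose the multiset of edges in a putative nonnegative-integral representation of Expression~(\ref{eq:goalproduct}) into sign-alternating walks, use the oddness of $C$ to force one walk to run from $C$ to $C'$, and reduce that walk to an alternating path contradicting the failure of the odd cycle condition---is essentially the paper's argument stated contrapositively (the paper assumes no walk crosses and extracts a parity contradiction). The genuine gap is that the decisive step is exactly the one you defer as ``the main obstacle,'' and the invariant you offer for it does not close the argument: the oddness of $|E(C)|$ and the index-two sublattice observation concern $C$ in isolation, and by themselves do not prevent the edge multiset from being organized into alternating closed walks avoiding $C'$. What matters is the parity of the \emph{walk endpoints}. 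Concretely: by Lemma~\ref{lem:converse:Ehrhart}, $C$ has an odd number of vertices of nonzero signature; in any decomposition of $\{e\text{ with multiplicity }z_e\}$ into alternating walks, a vertex whose exponent in Expression~(\ref{eq:goalproduct}) is zero can only be interior, while each $\ell\in C$ with $\sig_C(\ell)=\pm1$ must be an endpoint of an odd number of walks (each walk-end contributes $\pm1$ to the exponent of $x_\ell$ and each passage through $\ell$ contributes $0$, by Observation~\ref{obs:alternating}). Summing over the odd number of such vertices, the total number of walk-ends in $C$ is odd; a walk with both ends in $C$ contributes two and a closed walk none, so some walk has exactly one end in $C$, and its other end must lie in $C'$, the only remaining vertices with nonzero exponent. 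Equivalently, in the paper's phrasing: if no walk crossed, the walks with both ends in $C$ would have to multiply to $\prod_{\ell\in C}x_\ell^{\sig_C(\ell)}$, which has odd total degree, while each such walk contributes a factor $x_{i_0}^{\pm1}x_{i_n}^{\pm1}$ of even total degree.

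Two further cautions. First, your reformulation via the vanishing combination $\sum_{e\in C\cup C'}\rho(e)-2\sum_e z_e\rho(e)=0$ and a decomposition into \emph{closed} alternating walks (as in Theorem~\ref{thm:equivalence}) discards precisely the endpoint data the parity argument needs; it is cleaner to decompose the multiset $\{e:z_e\}$ itself into open and closed walks so that the open walks are forced to end at nonzero-signature vertices of $C\cup C'$, as the paper does. Second, the walk-to-path reduction you flag is a real issue (the paper elides it too), and ``disposing of the odd-length closed sub-walks by a separate exchange'' is not yet an argument; a minimal-length alternating walk, or restating condition~(3) of Definition~\ref{def:OCC} in terms of alternating walks, is needed to finish. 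With the corrected parity invariant supplied, your outline completes to the paper's proof.
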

\begin{proof}
Suppose, for contradiction, that there are nonnegative integer weights on the edges of $G$ whose corresponding ring element is Expression (\ref{eq:goalproduct}).

Let $\mathcal{E}$ be the multiset of the edges of $G$ with multiplicity according to their weights.  All vertices other than those in $C$ or $C'$ with nonzero signature do not appear in Expression (\ref{eq:goalproduct}).  Therefore, for any such vertex $v$, the number of positive or negative edges in $\mathcal{E}$ incident to $v$ must be equal.  Thus, we can partition $\mathcal{E}$ into a collection of closed alternating walks and alternating walks whose endpoints are points in $C$ or $C'$ with nonzero signature.  With this construction, there are four types of walks:
\begin{inparaenum}[(1)]
\item closed walks, 
\item walks with both endpoints in $C$,
\item walks with both endpoints in $C'$, and 
\item walks with one endpoint in $C$ and the other in $C'$.
\end{inparaenum}
Since $C$ and $C'$ violate the odd cycle condition, there are no walks of Type (4).
Similarly, Type (1) closed walks correspond to the identity in the ring and can be discarded, see Theorem \ref{thm:equivalence}.  Therefore, we reduce to the case where $\mathcal{E}$ can be partitioned into walks of Types (2) and (3).  By Observation \ref{obs:alternating}, for any walk of Type (2), with endpoints $i_0$, and $i_n$, the corresponding ring element is $x_{i_0}^{\sgn(i_0i_1)}x_{i_n}^{\sgn(i_{n-1}i_n)}$; a similar statement holds in $C'$ for walks of Type (3).  We established in Lemma \ref{lem:converse:Ehrhart} that $C$ and $C'$ each have an odd number of vertices with nonzero signature.  However, since each walk of Type (2) introduces a monomial of even total degree, it is impossible for the product $\prod_{\ell\in C}x_{\ell}^{\sig_C(\ell)}$, which is of odd total degree, to be the product of walks of Type (2).
\end{proof}

\begin{theorem}\label{thm:converse}
Let $G$ be a signed graph which does not satisfy the odd cycle condition, then $k[G]$ is not normal.
\end{theorem}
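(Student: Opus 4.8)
The plan is to combine Lemmas \ref{lem:converse:Ehrhart} and \ref{lem:converse:Edge} into an explicit witness of non-normality, following the pattern illustrated at the end of Example \ref{ex:nonnormal}. Write $M$ for the monomial appearing in Expression (\ref{eq:goalproduct}). Since $k[G]$ is a subring of the Laurent polynomial ring $k[x_1^{\pm 1},\dots,x_n^{\pm 1}]$ it is a domain, so it makes sense to ask whether it is integrally closed in its fraction field. I will show that $M$ lies in the fraction field of $k[G]$, that $M$ is integral over $k[G]$, and that $M\notin k[G]$; together these say $k[G]$ is not normal.

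The last point is exactly Lemma \ref{lem:converse:Edge}. For membership in the fraction field, I would use that the proof of Lemma \ref{lem:converse:Ehrhart} writes the exponent vector of $M$ as an integral combination $\sum_{e\in E(G)}z_e\,\rho(e)$ with $z_e\in\Z$ (i.e.\ as an element of $\mathcal{L}_G$); separating the edges with $z_e>0$ from those with $z_e<0$ presents $M$ as a quotient
\[
M=\Bigl(\prod_{z_e>0}(x^{\rho(e)})^{z_e}\Bigr)\Big/\Bigl(\prod_{z_e<0}(x^{\rho(e)})^{-z_e}\Bigr)
\]
of two elements of $k[G]$, so $M\in\operatorname{Frac}(k[G])$. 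For integrality, I would revisit Equation (\ref{eq:APG}): doubling every edge weight there from $\tfrac12$ to $1$ shows that twice the exponent vector of $M$ equals $\sum_{e\in C\cup C'}\rho(e)$, which is a nonnegative integral combination of $\rho(E(G))$ and hence lies in $T_2$. Therefore $M^2\in k[G]$, and $M$ is a root of the monic polynomial $z^2-M^2\in k[G][z]$. (Alternatively, one could invoke that $\APG$ is normal and module-finite over $k[G]$, so $\APG$ contains the integral closure of $k[G]$, and $M\in\APG$ by Lemma \ref{lem:converse:Ehrhart}; but the square relation is self-contained and cheaper.) Combining the three facts, $M$ is an element of $\operatorname{Frac}(k[G])$ integral over $k[G]$ but not in $k[G]$, so $k[G]$ is not normal; equivalently, by Proposition \ref{prop:Gordan's}(c), $T_2$ is not a normal semigroup.

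There is essentially no obstacle remaining at this stage: all of the combinatorial and computational content has been absorbed into Lemmas \ref{lem:converse:Ehrhart} and \ref{lem:converse:Edge}, and the theorem is their formal combination. The one point meriting a moment of care is to confirm that the integral combination produced in the proof of Lemma \ref{lem:converse:Ehrhart} uses only the generators $\rho(e)$ for $e\in E(G)$ (with integer, possibly negative, coefficients), so that the quotient presentation above genuinely has numerator and denominator in $k[G]$; inspecting that proof confirms this, as every factor appearing there is $x^{\rho(e)}$ or its inverse for an edge $e$ of $G$.
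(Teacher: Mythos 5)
Your proposal is correct and follows essentially the same route as the paper's own proof: membership in the fraction field via the integral combination from Lemma \ref{lem:converse:Ehrhart}, integrality via $M^2\in k[G]$ from doubling the weights in Equation (\ref{eq:APG}), and non-membership from Lemma \ref{lem:converse:Edge}. The only difference is that you spell out more explicitly why an element of $\mathcal{L}_G$ yields a quotient of monomials in $k[G]$, which the paper leaves implicit.
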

\begin{proof}
Since Expression (\ref{eq:goalproduct}) is in $k[\mathcal{L}_G]$, by Lemma \ref{lem:converse:Ehrhart}, the monomial is in the fraction field of $k[G]$.  Moreover, since the square of Equation (\ref{eq:APG}) has all nonnegative integral weights, the square of Expression (\ref{eq:goalproduct}) is in $k[G]$.  Therefore, since Expression (\ref{eq:goalproduct}) is the root of a monic polynomial in $k[G]$, is in the fraction field of $k[G]$, and is not in $k[G]$, by Lemma \ref{lem:converse:Edge}, it follows that $k[G]$ is not normal.
\end{proof}

In the remainder of this section, we prove that the odd cycle condition is also a sufficient condition for $k[G]$ to be normal.  To reach this result, we reduce the graphs of interest to a few special cases in the next section.  The proof of Theorem \ref{thm:converse} includes an important construction for these structural results that we collect here: 

\begin{observation}\label{obs:productconstruction}
If $C$ is an odd cycle in a graph $G$ and $i$ is a vertex of $C$ such that $\sig_C(i)\not=0$, then there are edge weights of $0$ and $1$ on $C$ whose product is 
\[
\prod_{\ell\in C\setminus i}x_{\ell}^{\sig_C(\ell)}.
\]
\end{observation}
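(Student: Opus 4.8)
The plan is to lift, verbatim in spirit, the middle portion of the proof of Lemma~\ref{lem:converse:Ehrhart} and repackage it as a standalone statement; nothing new is needed, only isolating the construction of the product $\prod_{\ell\in C\setminus i}x_\ell^{\sig_C(\ell)}$ from edge weights on $C$. Write $C$ as a cycle on an odd number of vertices, so that deleting $i$ leaves a path $C\setminus i$ on an even number $2m$ of vertices, say $u_1,\dots,u_{2m}$ in the order they occur along the path. First I would record the bookkeeping identity between signatures and edge signs. Since $\sig_C(\ell)$ is half the sum of the signs of the two edges of $C$ incident to $\ell$, summing over $\ell\in C\setminus i$ counts every edge of $C\setminus i$ twice with weight $\tfrac12$ and each of the two edges of $C$ at $i$ once with weight $\tfrac12$, which gives Equation~(\ref{eq:rewritingsum}):
\[
\sum_{\ell\in C\setminus i}\sig_C(\ell)=\sig_C(i)+\sum_{k=1}^{2m-1}\sgn(u_ku_{k+1}).
\]
The right-hand side is a sum of $1+(2m-1)=2m$ terms each equal to $\pm 1$ (the first term is $\pm1$ because $\sig_C(i)\neq 0$ by hypothesis), hence it is even; this is exactly where oddness of $C$ is used.

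Next, a parity argument on the left-hand side shows that the number of vertices $\ell\in C\setminus i$ with $\sig_C(\ell)\neq 0$ is even, since a sum of an odd number of terms from $\{-1,0,1\}$ whose nonzero terms are $\pm1$ would be odd. List these vertices as $u_{j_1},\dots,u_{j_{2p}}$ with $j_1<\dots<j_{2p}$, in the order they occur along the path $C\setminus i$, and let $W_k$ be the subpath of $C\setminus i$ from $u_{j_{2k-1}}$ to $u_{j_{2k}}$. Every interior vertex of $W_k$ has signature $0$, and for a vertex lying in the interior of a path this means precisely that its two incident path-edges have opposite signs; hence each $W_k$ is an alternating path. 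Because the indices $j_1<\dots<j_{2p}$ are strictly increasing, the paths $W_1,\dots,W_p$ are pairwise edge-disjoint (they are nonoverlapping segments of the single path $C\setminus i$), so assigning edge-weight $1$ to every edge that lies on some $W_k$ and edge-weight $0$ to every other edge of $C$ is a well-defined $\{0,1\}$-weighting.

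Finally I would invoke Observation~\ref{obs:alternating}: for each alternating path $W_k$ carrying all edge-weights $1$, the associated ring element is $x_{u_{j_{2k-1}}}^{\sig_C(u_{j_{2k-1}})}x_{u_{j_{2k}}}^{\sig_C(u_{j_{2k}})}$, since the endpoints of $W_k$ lie in $C$ with nonzero signature and the interior variables cancel (each appears in two factors with opposite signs). Multiplying over $k=1,\dots,p$ produces exactly $\prod_{\ell\in C\setminus i}x_\ell^{\sig_C(\ell)}$: the exponent on $x_\ell$ coming from this weighting is $\sig_C(\ell)$ at the endpoints of the $W_k$'s, is $0$ at the remaining (zero-signature) vertices of $C\setminus i$, and is $0$ at $i$ itself since $i$ lies on no $W_k$. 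The only point requiring care is the claim that the $W_k$ are alternating and pairwise disjoint, and I expect this to be no real obstacle: both facts drop out of the linear ordering of the nonzero-signature vertices along the path $C\setminus i$ together with the characterization of a zero-signature interior vertex as one whose two path-edges disagree in sign.
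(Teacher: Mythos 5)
Your proposal is correct and follows essentially the same route as the paper, which proves this observation by extracting exactly this construction from the middle of the proof of Lemma~\ref{lem:converse:Ehrhart}: the parity argument via Equation~(\ref{eq:rewritingsum}) showing that $C\setminus i$ has an even number of nonzero-signature vertices, the pairing of consecutive such vertices into disjoint alternating subpaths $W_1,\dots,W_p$, and the application of Observation~\ref{obs:alternating} with weight $1$ on those subpaths and $0$ elsewhere. Your added remarks — that $\sig_C(i)\neq 0$ is precisely what makes the right-hand side of Equation~(\ref{eq:rewritingsum}) a sum of $2m$ terms each equal to $\pm 1$, and that the $W_k$ are alternating because a zero-signature interior vertex has incident edges of opposite sign — are exactly the justifications the paper gives, only spelled out slightly more explicitly.
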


\subsection{Reductions in \texorpdfstring{$T_1$}{T1}}\label{sec:AlterForms}

In this section, we explicitly translate between certain combinatorial graph properties and ring properties.  We then use this translation to reduce the graphs under consideration to simpler forms, which are easier to study.  In particular, we provide the combinatorial property such that the corresponding product in $\APG$ is $1$.  We then use this combinatorial property to reduce elements of $\APG$.  This is the main link we use to determine normality in the remainder of this section.

\begin{theorem}\label{thm:equivalence}
Let $G$ be a signed graph, and, for each edge $ij$ in $G$, let $a_{ij}$ be an integer.
The following are equivalent:
\begin{enumerate}
\item $\prod_{\pm ij\in E} (x_ix_j)^{\sgn(ij)a_{ij}}=1$.\label{cond:firstcondition:equivalence}
\item For all vertices $i$, $\sum_{j\sim i} \sgn(ij)a_{ij} = 0$ where the sum is taken over all $j$ adjacent to $i$.\label{cond:secondcondition:equivalence}
\item There is a multiset of closed walks allowing loops with weights $w_{ij}=\pm 1$ so that the products $w_{ij}\sgn(ij)$ alternate along the closed walks and $a_{ij}$ is the sum of the weights of the occurrences of edge $ij$ in the closed walks.\label{cond:fourthcondition:equivalence}
\end{enumerate}
\end{theorem}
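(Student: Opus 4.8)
The plan is to prove $(1)\Leftrightarrow(2)$, then $(2)\Rightarrow(3)$, and then $(3)\Rightarrow(1)$; together these give the equivalence of all three conditions. The first and last of these are routine bookkeeping, and essentially all the content sits in $(2)\Rightarrow(3)$.

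For $(1)\Leftrightarrow(2)$ I would simply compare exponents. Since $(x_ix_j)^{\sgn(ij)}=x^{\rho(ij)}$ for every edge (in particular $(x_ix_i)^{\sgn(ii)}=x_i^{2\sgn(ii)}=x^{\rho(ii)}$ for a loop, matching the convention of Definition \ref{def:edgepolytope}), the product in $(1)$ equals $x^{v}$ with $v=\sum_{\pm ij\in E}a_{ij}\rho(ij)\in\Z^n$. Working in a Laurent polynomial ring, $x^v=1$ if and only if $v=0$, i.e.\ if and only if every coordinate of $v$ vanishes; and the $i$-th coordinate of $v$ is exactly $\sum_{j\sim i}\sgn(ij)a_{ij}$, with a loop at $i$ contributing $2\sgn(ii)a_{ii}$. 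Thus $v=0$ is precisely condition $(2)$.

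For $(3)\Rightarrow(1)$: since each $a_{ij}$ is the sum of the weights of the occurrences of $ij$ among the given closed walks, the product in $(1)$ factors as a product over those walks $W$ of $\prod_{ij\in W}(x_ix_j)^{\sgn(ij)w_{ij}}$, so it is enough to show each such factor equals $1$. Fix such a walk $W$; reading the alternation of the effective signs $w_{ij}\sgn(ij)$ cyclically around $W$ forces $W$ to have even length. Then, exactly as in Observation \ref{obs:alternating}, the exponent of a variable $x_w$ in the corresponding factor is a sum of contributions of the form $w_{f}\sgn(f)+w_{f'}\sgn(f')$, one for each visit of $W$ to $w$, where $f,f'$ are the two edges of $W$ incident to $w$ at that visit; each such contribution is $0$ because consecutive effective signs along $W$ are opposite. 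Loops cause no difficulty, since a loop at $w$ simply realizes both edge-ends of a single visit. Hence every factor is $1$, and $(1)$ follows.

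The heart of the argument is $(2)\Rightarrow(3)$. First I would recast $(2)$ combinatorially: for each edge $ij$ with $a_{ij}\ne 0$ put $\tau(ij):=\sgn(ij)\sgn(a_{ij})\in\{+1,-1\}$, so that $\sgn(ij)a_{ij}=\tau(ij)|a_{ij}|$; then condition $(2)$ at a vertex $i$ says that the edges incident to $i$ with $\tau=+1$ and those with $\tau=-1$ carry the same total weight $\sum|a_{ij}|$ (a loop at $i$ counting twice). Now build the multigraph $M$ with $|a_{ij}|$ parallel copies of each edge $ij$, each copy labelled $\tau(ij)$; at every vertex the number of edge-ends labelled $+1$ equals the number labelled $-1$, so one may fix, vertex by vertex, a bijection pairing the $(+1)$-ends with the $(-1)$-ends. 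Combined with the (fixed-point-free) pairing of edge-ends given by the edges of $M$ themselves, this second pairing decomposes the edge set of $M$ into closed walks: starting at any edge-end, alternately cross the edge and follow the vertex-pairing. In each resulting closed walk consecutive edges meet at a vertex where their ends are paired, hence have opposite $\tau$, so with the weight choice $w_{ij}:=\sgn(a_{ij})$ the effective sign $w_{ij}\sgn(ij)=\tau(ij)$ alternates along the walk (and, as before, each walk has even length); and since every copy of $ij$ in $M$ is used exactly once, $ij$ occurs a total of $|a_{ij}|$ times, each with weight $\sgn(a_{ij})$, so the weights sum to $a_{ij}$. This is condition $(3)$. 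The step most in need of care, and the main obstacle, is exactly this decomposition: one must check that following the vertex-pairing really does yield closed walks covering each edge of $M$ exactly once, and that loops are accounted for correctly — a loop contributes two equally-labelled ends at its vertex, and the balance condition is what guarantees that such a loop is never forced to appear as a degenerate length-one walk (which would fail to alternate).
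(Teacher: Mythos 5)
Your proof is correct and follows essentially the same route as the paper: $(1)\Leftrightarrow(2)$ by comparing exponents coordinatewise, and $(2)\Rightarrow(3)$ by forming the multiset of $|a_{ij}|$ signed edge-copies and choosing, at each vertex, a perfect matching between oppositely-signed edge-ends whose orbits under the two pairings give the alternating closed walks. Your $\tau(ij)=\sgn(ij)\sgn(a_{ij})$ bookkeeping and the explicit remark about why a loop cannot close into a degenerate length-one walk are in fact slightly more careful than the paper's own wording, but the underlying argument is the same.
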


\begin{proof}
Since the exponent of $x_i$ in $1$ is the sum in $2$, the equivalence of Conditions $1$ and $2$ is straight forward.

We now prove the equivalence of $2$ and $3$.  If a collection of closed walks satisfying Condition \ref{cond:fourthcondition:equivalence} exist, then, let $ji$ and $ik$ be consecutive edges along one such closed walk.  Since the weighted walks are alternating, $w_{ji}\sgn(ji)+w_{jk}\sgn(jk)=0$.  Therefore, for a fixed vertex $i$, the weighted sum over all incident edges is $0$ because the edges come in alternating pairs.  By reorganizing this sum to combine repeated edges into the $a_{ij}$'s, the sum over all edges incident to $i$ is precisely the sum $\sum_{j\sim i} \sgn(ij)a_{ij}$, and is zero.  Since $i$ is arbitrary, the first statement follows.

On the other hand, suppose that the sums in Condition \ref{cond:secondcondition:equivalence} are zero for all vertices.   Let $\mathcal{E}$ be the multiset of pairs of signed edges of $G$ and $\pm 1$.  In particular, for each edge $jk\in G$ with $j\not=k$, the pair $(jk,\sgn(a_{jk}))$ occurs $|a_{jk}|$ times, and, for each loop, the pair $(jj,\sgn(a_{jj}))$ occurs $|2a_{jj}|$ times.  We observe that for fixed $i$, by assumption, the number of pairs $(ij,+1)$ and $(ik,-1)$ in $\mathcal{E}$ are equal where $j$ and $k$ vary over the neighbors of $i$.  For each $i$, choose a perfect matching between $(ij,+1)$ and $(ik,-1)$, where $j$ or $k$ may equal $i$.  The walks formed by following the matched edges form a collection of alternating closed walks since $\mathcal{E}$ is finite.
\end{proof}

Theorem \ref{thm:equivalence} is used to simplify the graph under consideration.  We now explicitly describe the condition that we use in our reductions.

\begin{definition}\label{def:reducingClosedWalk}
Let $G$ be a signed graph and $W$ a closed walk in $G$.  $W$ is a {\em reducing closed walk} if $W$ has even length and if $e\in G$, appears in $W$ multiple times, the number of edges between occurrences of $e$ is odd.  
\end{definition}

\begin{corollary}\label{cor:equivalence}
Let $G$ be a signed graph and $W$ a reducing closed walk of $G$.  Then there are nonzero edge weights $a_{ij}$ on the edges of $W$ satisfying the equivalence of Theorem \ref{thm:equivalence}.
\end{corollary}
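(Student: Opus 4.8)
The plan is to construct the edge weights $a_{ij}$ directly from the closed walk $W$ and then verify Condition (\ref{cond:secondcondition:equivalence}) of Theorem \ref{thm:equivalence}, which by that theorem is equivalent to the product in Condition (\ref{cond:firstcondition:equivalence}) being $1$. Since $W$ has even length $2N$, I would traverse $W$ and label its successive edges $e_1, e_2, \dots, e_{2N}$, then assign to the $k$-th step the alternating weight $w_k = (-1)^{k-1}$. For a directed (signed) edge $e = ij$ I would set $a_{ij}$ to be the sum of $w_k \sgn(e_k)$ over all indices $k$ at which the traversal uses edge $ij$ (adjusting by the customary factor for loops, as in the proof of Theorem \ref{thm:equivalence}). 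The point of the reducing-closed-walk hypothesis is exactly to guarantee that these contributions do not cancel to zero: if $e$ appears at steps $k$ and $k'$, the number of edges strictly between the two occurrences along $W$ is odd, so $k$ and $k'$ have the same parity, hence $w_k = w_{k'}$; and since the two traversals of $e$ agree in sign, the terms $w_k\sgn(e_k)$ and $w_{k'}\sgn(e_{k'})$ are equal rather than opposite. Summing over all occurrences therefore gives $a_{ij} = \pm(\text{number of occurrences of } e \text{ in } W) \neq 0$, which is what we must show.

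Next I would check Condition (\ref{cond:secondcondition:equivalence}), namely $\sum_{j \sim i}\sgn(ij)a_{ij} = 0$ for every vertex $i$. Expanding the definition of the $a_{ij}$, this sum equals $\sum_{k} w_k$, where $k$ ranges over all steps of the traversal of $W$ incident to $i$ — that is, every step entering $i$ and every step leaving $i$. But a closed walk visits $i$ some number of times, say $t$, and each visit contributes exactly one incoming step and one outgoing step, i.e. two consecutive indices $k$ and $k+1$, which carry opposite weights $w_k = -w_{k+1}$. Hence the $2t$ incident steps pair up into $t$ canceling pairs and the sum is $0$. (A small care point: a loop at $i$ is a single step that is simultaneously "incoming" and "outgoing"; this is handled by the loop convention in Definition \ref{def:edgepolytope} and in the proof of Theorem \ref{thm:equivalence}, where $\rho(ii) = 2\sgn(ii)e_i$, so a loop contributes with multiplicity two and the parity bookkeeping still works.)

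Having verified Condition (\ref{cond:secondcondition:equivalence}) and the nonvanishing of the $a_{ij}$, Theorem \ref{thm:equivalence} immediately yields that $\prod_{\pm ij \in E}(x_ix_j)^{\sgn(ij)a_{ij}} = 1$, which is the claimed equivalence. I expect the only delicate step to be the parity argument that links "odd gap between repeated occurrences of $e$" to "the two occurrences of $e$ receive the same alternating weight"; this is precisely where Definition \ref{def:reducingClosedWalk} is used, and it must be stated carefully because the gap is counted in edges, whereas the weights are indexed by steps, so one has to track the off-by-one between the two. Everything else is bookkeeping with the alternating weights already introduced in Theorem \ref{thm:equivalence}, and the loop convention from Definition \ref{def:edgepolytope}.
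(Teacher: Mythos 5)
Your proposal is correct and follows essentially the same argument as the paper's proof: your step-indexed weights $w_k=(-1)^{k-1}$ correspond to the paper's edge weights via $w_e = w_k\sgn(e_k)$ (so that $w_e\sgn(e)$ alternates), the odd-gap condition of Definition \ref{def:reducingClosedWalk} is used in exactly the same way to force equal parity and hence non-cancellation of repeated occurrences, and Condition (\ref{cond:secondcondition:equivalence}) is verified by the same pairing of consecutive incident steps.
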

\begin{proof}
We assign weights $w_e=\pm 1$ to the edges of $W$ so that $w_e\sgn(e)$ is alternating around $W$.  For an edge $e$ that occurs multiple times in $W$, since the number of edges between occurrences of $e$ is odd, $w_e$ has the same sign for every occurrence.  For each edge $e$, let $a_e$ be the number of occurrences of $e$ in $W$ times $w_e$, in other words, the sum of the weights for that edge for the multiple occurrences in $W$.  We observe that $a_e$ is never zero for edges in $W$ since the weights all have the same sign and cannot cancel.  Since $w_e\sgn(e)$ is alternating in $W$, the sum in Condition \ref{cond:secondcondition:equivalence} is a collection of pairs which sum to zero, so the conditions of Theorem \ref{thm:equivalence} hold.
\end{proof}

We now use Corollary \ref{cor:equivalence} to reduce the graph under consideration by rewriting the elements of $T_1$ in alternate ways.  This reduction allows us to reduce our attention to trees and unicyclic graphs with odd cycles.  In Section \ref{sec:sufficiency}, these reductions are used to complete the proof that the odd cycle condition is equivalent to the normality of $k[G]$.  

\begin{lemma}\label{lem:reducing}
Let $G$ be a signed graph, and assume that, for each edge $e$ of $G$, the weight $a_e$ is positive.  Suppose that $G$ contains a reducing closed walk $W$.  Then there is a proper subgraph $H$ of $G$ with fewer edges and positive edge weights $b_e$ so that $\sum_{e\in G} a_e \rho(e)=\sum_{e\in H} b_e\rho(e)$.
\end{lemma}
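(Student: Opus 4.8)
The plan is to treat the relation produced by Corollary~\ref{cor:equivalence} as a linear dependence among the vectors $\rho(e)$ and then subtract a suitable multiple of it from the weighting $(a_e)_{e\in G}$ so as to annihilate at least one weight while keeping all the others positive. First I would apply Corollary~\ref{cor:equivalence} to the reducing closed walk $W$ to obtain nonzero integers $c_e$, supported on the edges of $W$, satisfying the equivalent conditions of Theorem~\ref{thm:equivalence}. Reading Condition~\ref{cond:firstcondition:equivalence} in terms of exponent vectors — and recalling that the loop convention $\rho(ii)=2\sgn(ii)e_i$ already absorbs the factor of two — this says precisely that $\sum_{e\in W} c_e\,\rho(e)=0$; extending $c_e$ by zero to all of $E(G)$, this is a linear dependence among the $\rho(e)$ in which every coefficient on $W$ is nonzero. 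I would then normalize its sign: if $c_e\le 0$ for all $e$, replace $(c_e)$ by $(-c_e)$, which remains a dependence and, as the $c_e$ are not all zero, now has a strictly positive entry. Setting
\[
t^{\ast}\;=\;\min\Bigl\{\,\tfrac{a_e}{c_e}\ :\ e\in W,\ c_e>0\,\Bigr\},
\]
the hypothesis that every $a_e$ is positive gives $t^{\ast}>0$, and the minimum is attained at some edge $e_0\in W$.

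Next I would set $b_e\coloneqq a_e-t^{\ast}c_e$ for every edge $e$ of $G$ and run through the three cases: if $e\notin W$ then $b_e=a_e>0$; if $e\in W$ and $c_e<0$ then $b_e=a_e+t^{\ast}|c_e|>0$; and if $e\in W$ and $c_e>0$ then $b_e=a_e-t^{\ast}c_e\ge 0$, with $b_{e_0}=0$. Letting $H$ be $G$ with every edge $e$ satisfying $b_e=0$ deleted (in particular $e_0$ is deleted), $H$ is a proper subgraph of $G$ with strictly fewer edges, the surviving $b_e$ are all positive, and
\[
\sum_{e\in H}b_e\,\rho(e)\;=\;\sum_{e\in G}(a_e-t^{\ast}c_e)\,\rho(e)\;=\;\sum_{e\in G}a_e\,\rho(e)\;-\;t^{\ast}\!\!\sum_{e\in W}c_e\,\rho(e)\;=\;\sum_{e\in G}a_e\,\rho(e),
\]
since the last sum vanishes. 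I would also remark that when the $a_e$ are rational, so is $t^{\ast}$, hence so are the $b_e$, so the reduction stays within rational weightings, which is all that is needed for later use.

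The only point requiring genuine care is that the set $\{e\in W : c_e>0\}$ over which $t^{\ast}$ is defined is nonempty; this is exactly what the sign-normalization step secures, since a nontrivial dependence cannot have all coefficients $\le 0$ after the possible flip. Everything else is the routine bookkeeping of the three-case sign analysis above, so I do not anticipate a real obstacle. Thus I would present the argument in the order: (i) extract the vanishing combination $\sum_{e\in W}c_e\rho(e)=0$ from Corollary~\ref{cor:equivalence}; (ii) normalize its sign and choose $t^{\ast}$, noting $t^{\ast}>0$; (iii) define $H$ and $(b_e)$, verify positivity of the surviving weights, the strict drop in edge count, and the displayed identity.
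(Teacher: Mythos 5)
Your proof is correct and is essentially the paper's argument: extract the vanishing combination $\sum_{e\in W}c_e\rho(e)=0$ from Corollary~\ref{cor:equivalence}, perform the standard ratio test to zero out at least one weight while keeping the rest nonnegative, and delete the zero-weight edges. If anything, your explicit sign normalization (ensuring some $c_e>0$ before taking the minimum of $a_e/c_e$) is slightly more careful than the paper, which simply asserts that the minimizing ratio is negative.
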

\begin{proof}
Let $c_e$ be the weights on $W$ constructed in Corollary \ref{cor:equivalence}.  Let $e'$ be the edge of $W$ so that $\frac{a_{e'}}{c_{e'}}$ is minimized (which is negative).  For each edge $e\in W$, we add $\left|\frac{a_{e'}}{c_{e'}}\right|c_e$ to its weight.  In other words, the weights on the edges $e\in G\setminus W$ are unchanged, i.e., $b_e=a_e$, and the weights on the edges $e\in W$ are $b_e=a_e+\left|\frac{a_{e'}}{c_{e'}}\right|c_e$.  By the minimality of $e'$, all the weights $b_e$ are nonnegative, and edge $e'$ has weight $0$ (since $c_{e'}$ is negative).  Let $H$ be the subgraph of $G$ whose edges have nonzero weights.  By Corollary \ref{cor:equivalence}, we know that $\sum_{e\in W}c_e\rho(e)=0$.  Therefore, since the sums differ by a multiple of $0$, the desired equality holds.
\end{proof}

We now use this lemma to reduce any graph to one of a simpler form, which is easier to study.

\begin{corollary}\label{cor:reduction1}
Let $G$ be a signed graph, and assume that for each edge $e$ of $G$, the weight $a_e$ is positive.  Then, there is a subgraph $H$ where each component of $H$ is a tree or a unicyclic graph with an odd cycle and there are positive edge weights $b_e$ on $H$ so that $\sum_{e\in G} a_e \rho(e)= \sum_{e\in H} b_e\rho(e)$.
\end{corollary}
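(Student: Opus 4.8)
The plan is to iterate Lemma \ref{lem:reducing} until no reducing closed walk remains, and then argue that a graph (with positive edge weights) containing no reducing closed walk must have every component a tree or a unicyclic graph whose unique cycle is odd. Starting from $G$ with its positive weights $a_e$, I would apply Lemma \ref{lem:reducing} to obtain a proper subgraph with strictly fewer edges and positive edge weights realizing the same element $\sum a_e\rho(e)$; since the number of edges strictly decreases at each step, this process terminates after finitely many steps in a subgraph $H$ with positive edge weights $b_e$ satisfying $\sum_{e\in G}a_e\rho(e)=\sum_{e\in H}b_e\rho(e)$ and containing \emph{no} reducing closed walk. It remains to characterize such $H$.

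So the heart of the argument is the following structural claim: a signed graph containing no reducing closed walk has each component equal to a tree or a unicyclic graph with an odd cycle. I would argue by contradiction in two cases. First, suppose some component of $H$ contains two distinct cycles. Then that component contains either two edge-disjoint cycles joined by a (possibly trivial) path, or a "theta" subgraph (two vertices joined by three internally disjoint paths); in either configuration one can traverse a closed walk that uses the connecting structure and the two cycles in such a way that the total length is even — if the natural traversal has odd length, reroute around one of the cycles once more to flip the parity — and such that any repeated edge is separated by an odd number of edges along the walk (repeated edges only occur on the connecting path, and each such edge is traversed once "out" and once "back," with an odd number of edges strictly between the two occurrences because the intervening cycle has been inserted; here one uses that a cycle can be chosen to contribute odd length after the parity adjustment). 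This produces a reducing closed walk, a contradiction. Hence every component has at most one cycle. Second, suppose a component is unicyclic with an \emph{even} cycle $C$; then $C$ itself is a closed walk of even length with no repeated edges, hence vacuously a reducing closed walk, again a contradiction. Therefore every component of $H$ is a tree or unicyclic with an odd cycle, as desired.

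The main obstacle is the parity-and-spacing bookkeeping in the first case of the structural claim: one must exhibit, for \emph{every} component containing two cycles, an actual closed walk that is simultaneously even in length and has all repeated-edge gaps odd. The clean way to handle this is to reduce to the two canonical configurations (two cycles sharing at most one vertex, connected by a path; or a theta graph) and check each: in the path-joined case, walk out along the path, around the first cycle, back along the path, around the second cycle — each path edge appears exactly twice with a full cycle between the occurrences, forcing odd gaps once we arrange (by possibly doubling a traversal of one cycle) that each inserted cycle has odd length relative to the walk, and the total length, being a sum we can adjust mod $2$ by the same doubling trick, is made even; the theta case is similar, traversing two of the three paths as one cycle and then incorporating the third. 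I would present this as a short lemma feeding into the corollary, since the verification, while elementary, is exactly where the definition of "reducing closed walk" is exploited. Everything else — termination of the reduction, preservation of the realized lattice element, positivity of the weights — is immediate from Lemma \ref{lem:reducing}.
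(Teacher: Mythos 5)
Your overall strategy is exactly the paper's: iterate Lemma \ref{lem:reducing} until no reducing closed walk remains, then show that a reducing-closed-walk-free graph has only tree or odd-unicyclic components, with the key construction being a closed walk that traverses two cycles once each and a connecting path twice. However, two steps in your structural claim would fail as written. First, the parity-flip maneuver (``if the natural traversal has odd length, reroute around one of the cycles once more'') does not work: traversing a cycle of length $\ell$ twice in succession places two occurrences of each of its edges exactly $\ell-1$ edges apart, so doubling an odd cycle produces \emph{even} gaps (violating Definition \ref{def:reducingClosedWalk}), while doubling an even cycle does not change the parity of the total length. Second, in the theta configuration the two cycles share edges, and the walk you sketch (``traversing two of the three paths as one cycle and then incorporating the third'') repeats edges of a shared path with gaps of the form $2k-2$ or $|P_1|+|P_2|-1$, which are not odd in general; the construction does not go through directly.

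Both problems disappear with the paper's ordering of cases, and your own second case already contains the needed observation, just stated too narrowly. An even cycle \emph{anywhere} (not only in a unicyclic component) is itself a reducing closed walk, so $H$ contains no even cycle at all. This makes the parity adjustment unnecessary: in the path-joined case both cycles are odd, so the total length $|C|+|C'|+2|P|$ is automatically even, and each repeated path edge has its two occurrences separated by a doubly-covered portion of $P$ plus one odd cycle, hence an odd gap. It also disposes of the theta case (and, more generally, of two non-disjoint odd cycles) without any walk construction, since among the three cycles of a theta graph at least one is even; two odd cycles meeting in exactly one vertex are handled by concatenating them into an even closed walk with no repeated edges. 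With the cases reordered in this way your argument becomes the paper's proof.
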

\begin{proof}
Iteratively applying the reduction in Lemma \ref{lem:reducing}, we assume that $H$ is a subgraph of $G$ which does not contain a reducing closed walk.  We observe that $H$ cannot contain an even cycle because an even cycle is a reducing closed walk.  Moreover, if $H$ contains two odd cycles $C$ and $C'$ which are not disjoint, then their union contains an even cycle, which is a reducing closed walk.  Finally, if $H$ contains two disjoint odd cycles $C$ and $C'$ in the same component, let $P$ be a path between them whose interior is disjoint from $C$ and $C'$.  Let $W$ be the walk formed by walking around $C$ and $C'$ once and path $P$ twice.  $W$ has even length since $C$ and $C'$ both have odd length and $P$ is covered twice.  Moreover, the only edges which occur multiple times are those in $P$, but the edges between a repeated edge consist of a portion of $P$ covered twice and one of the cycles, i.e., an odd number of edges.  Therefore, $W$ is a reducing closed walk.  The only possible cases are those in the statement of the corollary.
\end{proof}

\subsection{Sufficiency of Odd Cycle Condition}\label{sec:sufficiency}

In this section, we use Corollary \ref{cor:reduction1} to show that the odd cycle condition for the graph $G$ is equivalent to the normality of the ring $k[G]$.  We begin with two lemmas that give key structural results in the main theorem.

\begin{lemma}\label{lem:oddcyclesonly}
Let $G$ be a signed graph and $\alpha\in T_1$.  Let $H$ be a subgraph of $G$ such that each component of $H$ is a tree or a unicyclic graph with an odd cycle.  Suppose that $\alpha=\sum_{e\in H}a_e\rho(e)$ where $0<a_e<1$.  Then, $H$ is a collection of odd cycles and $a_e=\frac{1}{2}$ for all $e\in H$.
\end{lemma}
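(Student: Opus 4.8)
The statement concerns $\alpha \in T_1 = \cone(\PG) \cap \mathcal{L}_G$ written as $\alpha = \sum_{e\in H} a_e \rho(e)$ with all coefficients strictly between $0$ and $1$, where $H$ has components that are trees or odd-unicyclic graphs. The plan is to analyze $H$ component by component, and within each component, vertex by vertex, using a parity/degree argument together with the integrality constraint coming from $\alpha \in \mathcal{L}_G \subseteq \Z^d$. The key tension to exploit: the coefficients $a_e$ are fractional, but the coordinate of $\alpha$ at each vertex $v$, namely $\sum_{e \ni v} a_e \sgn(e)$ (with a factor of $2$ for a loop), must be an integer.

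\textbf{Step 1: Eliminate trees and the tree parts of unicyclic components.} First I would argue that $H$ has no leaves. If $v$ is a leaf of $H$ incident to the single edge $e = \pm vw$, then the $v$-coordinate of $\alpha$ is $\pm a_e$, which lies in $(-1,1)\setminus\{0\}$ and hence is not an integer, contradicting $\alpha \in \mathcal{L}_G$. (A loop at a leaf gives $\pm 2a_e \in (-2,0)\cup(0,2)$, still possibly nonintegral only at the value $\pm 1$; I would handle loops separately — see Step 3 — but note a vertex carrying only a loop and nothing else still must have even coordinate, forcing $2a_e \in 2\Z$, impossible for $a_e\in(0,1)$.) Iterating leaf-removal, every tree component vanishes entirely and every unicyclic component is pruned down to just its odd cycle. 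So $H$ is a disjoint union of odd cycles, each edge of which carries a coefficient in $(0,1)$.

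\textbf{Step 2: Force $a_e = \tfrac12$ on each odd cycle.} Fix an odd cycle $C$ in $H$ with vertices $u_1,\dots,u_{2k+1}$ in cyclic order and edge coefficients $c_i := a_{u_i u_{i+1}} \in (0,1)$ (indices mod $2k+1$). At vertex $u_i$ the coordinate of $\alpha$ is $\sgn(u_{i-1}u_i)c_{i-1} + \sgn(u_i u_{i+1})c_i$, an integer in $(-2,2)$, so it equals $-1$, $0$, or $1$. This gives, for each $i$, a linear relation $\varepsilon_{i-1} c_{i-1} + \varepsilon_i c_i = s_i$ with $\varepsilon_j = \pm1$ the edge signs and $s_i \in \{-1,0,1\}$. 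The plan is to show the only solution with all $c_i \in (0,1)$ is $c_i \equiv \tfrac12$ (and all $s_i$ equal, of the appropriate sign). Solving the cyclic system: from consecutive relations one gets $c_{i+1} = \varepsilon_{i+1}^{-1}(s_{i+1} - \varepsilon_i c_i) = \pm c_i + (\text{something in }\tfrac12\Z)$ — more precisely, propagating around the cycle expresses each $c_j$ as $\pm c_1 + (\text{half-integer})$. Going all the way around the odd cycle, the accumulated sign is $\prod \varepsilon_i$ over an odd number of edges times the propagation signs; the parity bookkeeping (this is exactly where "odd cycle" enters, paralleling the signature computations in Observation~\ref{obs:alternating} and Lemma~\ref{lem:converse:Ehrhart}) yields a relation of the form $c_1 = -c_1 + m/2$ or $2c_1 \in \Z$, i.e. $c_1 \in \tfrac12\Z$; combined with $c_1 \in (0,1)$ this forces $c_1 = \tfrac12$, and then the propagation forces every $c_i = \tfrac12$.

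\textbf{Main obstacle.} The delicate point is Step 2: correctly tracking the signs around the odd cycle. One must verify that after substituting $c_i = \tfrac12$ the consistency condition around the cycle is actually satisfiable (so that such $\alpha$ exist and the lemma isn't vacuous) and, conversely, that no "mixed" solution with some $c_i \ne \tfrac12$ survives the integrality constraints at all vertices simultaneously. I expect the clean way to do this is: set $b_i := c_i - \tfrac12 \in (-\tfrac12,\tfrac12)$; the integrality relation at $u_i$ becomes $\varepsilon_{i-1}b_{i-1} + \varepsilon_i b_i \in \tfrac12(\sgn\ \text{stuff}) + \Z$, and a short computation shows $\varepsilon_{i-1}b_{i-1} + \varepsilon_i b_i \in \Z$, hence (by the $(-1,1)$ bound) $\varepsilon_{i-1}b_{i-1} = -\varepsilon_i b_i$ for all $i$; chaining these equalities around the odd cycle gives $b_1 = -b_1$, so $b_1 = 0$, and then all $b_i = 0$. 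This reduces the whole argument to the single clean parity fact that an odd cycle's worth of sign-flips cannot close up consistently unless the shifted variable is zero, which is the same phenomenon that makes odd cycles special throughout Section~\ref{sec:Algebraic}.
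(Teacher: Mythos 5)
Your overall strategy is the same as the paper's: use integrality of each vertex coordinate of $\alpha$ (coming from $\alpha\in\mathcal{L}_G$) to rule out leaves, so that $H$ collapses to a disjoint union of odd cycles, and then propagate the two-term vertex relations around each odd cycle to force every weight to be $\tfrac12$. Your Step 2 is in fact a cleaner packaging of the paper's argument: the paper does a case analysis showing consecutive weights satisfy $a_{i\ell}=a_{ik}$ or $a_{i\ell}=1-a_{ik}$ and then locates, by parity, a vertex where the weight must equal $\tfrac12$; your substitution $b_i=c_i-\tfrac12$, which turns each vertex relation into $\varepsilon_{i-1}b_{i-1}+\varepsilon_i b_i=0$ (since $\tfrac12(\varepsilon_{i-1}+\varepsilon_i)\in\Z$ and the sum has absolute value less than $1$) and then gives $b_1=-b_1$ after an odd number of sign flips, is equivalent but tidier.

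The one genuine error is your treatment of loops. A loop is an odd cycle of length one, and the lemma's conclusion permits it with weight $\tfrac12$; indeed, in Example \ref{ex:nonnormal} the relevant $H$ consists of the two loops $+11$ and $-33$, each with weight $\tfrac12$. Your parenthetical asserts that a vertex of $H$ carrying only a loop ``must have even coordinate, forcing $2a_e\in 2\Z$, impossible,'' which would wrongly exclude loops from $H$ altogether. There is no reason the coordinate is even: integrality comes from $\alpha\in\mathcal{L}_G=\Z\rho(E(G))$, and $G$ may have non-loop edges at that vertex, so the coordinate is merely an integer. (In Example \ref{ex:nonnormal}, $\alpha=(1,0,-1)$ has first coordinate $1$, odd, realized in $H$ by the loop $+11$ with weight $\tfrac12$.) The correct conclusion is only that $2\sgn(ii)\,a_{ii}\in\Z$, hence $a_{ii}=\tfrac12$ --- which is exactly how the paper handles loops, and which your $b$-substitution also yields once the loop's contribution to its vertex is counted with multiplicity two. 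Note also that the ``Step 3'' to which you defer the loop case does not appear in your write-up. With that local repair the proof is correct.
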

\begin{proof}
We recall that since $\alpha\in T_1$, $\alpha$ is an integral sum of $\rho(e)$ for $e\in G$.  Therefore, for any vertex $i$, $\sum_{j\sim i}\sgn(ij)a_{ij}$ is an integer, where the sum is taken over all $j$ adjacent to $i$.

Suppose, for contradiction, that $i\in H$ is a leaf.  Since $i$ is a leaf, the sum $\sum_{j\sim i}\sgn(ij)a_{ij}$ consists of a single term, which is not an integer, a contradiction.  Therefore, $H$ is a union of disjoint odd cycles.  

Let $C$ be a cycle in $H$.  If $C$ is a loop at $i$, then, by the observation above, we know that $2\sgn(ii)a_{ii}$ is an integer, so $a_{ii}=\frac{1}{2}$.  If $C$ is not a loop, suppose that $k$, $i$, and $\ell$ are consecutive vertices in $C$.  Then, the sum $\sum_{j\sim i}\sgn(ij)a_{ij}$ reduces to $\sgn(ik)a_{ik}+\sgn(i\ell)a_{i\ell}$.  By our assumptions, the sum is one of $-1$, $0$, or $1$.  By a case-by-case analysis, we see that either $a_{i\ell}=a_{ik}$ or $a_{i\ell}=1-a_{ik}$.  By repeating this argument for each vertex of $C$, we see that we can partition the edges of $C$ into two classes, those with weight $a_{ik}$ and those with weight $1-a_{ik}$.  By a parity argument, there is either: (1) some vertex $s$ with neighbors $r$ and $t$ so that $a_{rs}=1-a_{st}$ and $\sgn(rs)a_{rs}+\sgn(st)a_{st}=0$ or (2) some vertex $s$ with neighbors $r$ and $t$ so that $a_{rs}=a_{st}$ and $\sgn(rs)a_{rs}+\sgn(st)a_{st}=\pm1$.  In either case, we find that $a_{rs}=\frac{1}{2}$, so all weights of $C$ are $\frac{1}{2}$.
\end{proof}

\begin{lemma}\label{lem:keyproduct}
Let $G$ be a signed graph with $C_1$ and $C_2$ disjoint odd cycles in the same component.  Suppose that $P$ is an alternating path between $C_1$ and $C_2$ whose interior is disjoint from $C_1\cup C_2$.  Then $k[G]$ contains
\[
\prod_{i\in C_1}x_i^{\sig_{C_1}(i)}\prod_{j\in C_2}x_j^{\sig_{C_2}(j)}.
\]
\end{lemma}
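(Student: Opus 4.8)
The statement upgrades the construction in the proof of Lemma~\ref{lem:converse:Ehrhart}: there the walk joining the two cycles was arbitrary, so its contribution to the exponent vector had to be written with weights $\pm1$ and only landed in $\APG$; here $P$ is alternating, and the plan is to exploit this to realize the \emph{same} monomial with \emph{nonnegative} integer edge weights, hence inside $k[G]=k[T_2]$. Concretely, I will produce nonnegative integers $a_e$, $e\in E(G)$, with $\sum_e a_e\rho(e)$ equal to the exponent vector of $\prod_{i\in C_1}x_i^{\sig_{C_1}(i)}\prod_{j\in C_2}x_j^{\sig_{C_2}(j)}$.

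Write $P$ as a path from $a\in C_1$ to $b\in C_2$, and let $t_a$, $t_b$ be the signs of the edges of $P$ incident to $a$ and to $b$. Since the interior of $P$ avoids $C_1\cup C_2$ and $C_1\cap C_2$ is empty, the edge sets $E(C_1)$, $E(C_2)$, $E(P)$ are pairwise disjoint, so I may define the weighting piecewise on these three sets and compute the exponent of each $x_v$ locally. On $E(P)$ I put weight $1$ on every edge; since $P$ is alternating, Observation~\ref{obs:alternating} shows this contributes $0$ to the exponent of $x_v$ at every interior vertex $v$ of $P$ and contributes $t_a$ at $a$ and $t_b$ at $b$. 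Thus everything reduces to the following single-cycle claim, which strengthens Observation~\ref{obs:productconstruction}: \emph{for an odd cycle $C$, a vertex $c\in C$, and $\tau\in\{-1,+1\}$, there are nonnegative integer weights on $E(C)$ whose ring element is $\big(\prod_{\ell\in C}x_\ell^{\sig_C(\ell)}\big)x_c^{-\tau}$.} Applying the claim to $C_1$ with $(c,\tau)=(a,t_a)$ and to $C_2$ with $(c,\tau)=(b,t_b)$, and multiplying by the element $x_a^{t_a}x_b^{t_b}$ produced by the $P$-weighting, gives exactly the displayed monomial as an element of $k[G]$.

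To prove the claim I split on $\sig_C(c)$. If $\sig_C(c)\ne0$ and $\tau=\sig_C(c)$, then $\big(\prod_{\ell\in C}x_\ell^{\sig_C(\ell)}\big)x_c^{-\tau}=\prod_{\ell\in C\setminus c}x_\ell^{\sig_C(\ell)}$, which is exactly the output of Observation~\ref{obs:productconstruction} with $i=c$. If $\sig_C(c)=0$, I take the edge of $C$ at $c$ of sign $-\tau$ (one of the two edges at $c$ has this sign), follow the maximal alternating arc of $C$ that it starts to a vertex $v$ of nonzero signature, apply Observation~\ref{obs:productconstruction} with $i=v$ (which puts weight $0$ on that arc), and add $1$ to every edge of the sub-arc from $c$ to $v$; by Observation~\ref{obs:alternating} this sub-arc supplies the missing factor $x_c^{-\tau}x_v^{\sig_C(v)}$. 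If $\sig_C(c)\ne0$ and $\tau=-\sig_C(c)$, the target extra factor is $x_c^{\sig_C(c)}$: when $C$ has a second vertex of nonzero signature, let $v$ be one adjacent to $c$ along a common maximal alternating arc, apply Observation~\ref{obs:productconstruction} with $i=v$, and add $1$ to that whole arc, which then contributes $x_c^{\sig_C(c)}x_v^{\sig_C(v)}$; when instead $c$ is the unique vertex of nonzero signature in $C$, then $\prod_{\ell\in C\setminus c}x_\ell^{\sig_C(\ell)}=1$ and I simply put weight $1$ on every edge of $C$, which (as $2\sig_C(\ell)=0$ for $\ell\ne c$) contributes precisely $x_c^{2\sig_C(c)}=x_c^{\sig_C(c)}\cdot x_c^{-\tau}$. (Loops need only the bookkeeping change $\rho(ii)=2\sgn(ii)e_i$.)

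The step I expect to be the crux is this single-cycle claim, specifically the cases $\tau=-\sig_C(c)$ and $\sig_C(c)=0$: Observation~\ref{obs:productconstruction} only covers the aligned case, and the genuinely new phenomenon — a path meeting a cycle with a single vertex of nonzero signature in the direction that opposes the alternation — is exactly where, in the Laurent setting, an $\APG$-witness threatens not to lie in $k[G]$, and where one must use the ``weight $1$ on the entire cycle'' move to keep every weight nonnegative. Everything else is the telescoping of alternating walks (Observation~\ref{obs:alternating}) together with the parity fact, already used in Lemma~\ref{lem:converse:Ehrhart}, that an odd cycle has an odd, and in particular nonzero, number of vertices of nonzero signature.
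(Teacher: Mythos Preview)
Your proof is correct and essentially the same as the paper's: there the path $P$ is extended to a maximal alternating walk $P'=P_1PP_2$ with $P_1\subseteq C_1$, $P_2\subseteq C_2$ ending at vertices $i',j'$ of nonzero signature, after which weight $1$ is placed on all of $P'$ and Observation~\ref{obs:productconstruction} is applied at $i'$ and $j'$. Your ``add $1$ to the sub-arc from $c$ to $v$'' in Cases~2 and~3a, and the full-cycle weight in Sub-case~3b, are exactly what the paper's extension $P_1$ produces in those situations, so your single-cycle claim and its case split simply unpack the phrase ``maximal alternating walk.''
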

\begin{proof}
Suppose $P$ has vertices $\{i_0=i,\dots,i_n=j\}$ and edges $\{i_0i_1,\dots,i_{n-1}i_n\}$, where $i\in C_1$ and $j\in C_2$.  Let $P'$ be a maximal alternating walk of the form $P'=P_1PP_2$ where $P_1$ and $P_2$ are, possibly empty, walks in $C_1$ and $C_2$, respectively.  We observe that if $\sgn(i_0i_1)=\sig_C(i)$, then the path $P$ cannot be extended.  Consider the case where the equality does not hold.  In this case, at least one incident edge to $i$ in $C$ extends the alternating walk.  We continue to add edges to $P_1$ until encountering a vertex with nonzero signature.  There always exists such a vertex because, by Lemma \ref{lem:converse:Ehrhart}, there are an odd number of vertices in $C_1$ with nonzero signature.  A similar statement holds for $j\in C_2$.  

Let $P'$ have vertices $\{j_0=i',\dots,j_m=j'\}$ and edges $\{j_0j_1,\dots,j_{m-1}j_m\}$, where $i'\in C_1$ and $j'\in C_2$.  Then $P'$ is an alternating path such that $\sgn(j_0j_1)=\sig_{C_1}(i')$ and $\sgn(j_{m-1}j_m)=\sig_{C_2}(j')$.  Therefore, by assigning a weight of $1$ to all the edges in $P'$, it follows from Observation \ref{obs:alternating} that $k[G]$ contains
\begin{equation}\label{eq:Obs1:endpoints}
x_{i'}^{\sgn(j_0,j_1)}x_{j'}^{\sgn(j_{m-1}j_m)}=x_{i'}^{\sig_{C_1}(i')}x_{j'}^{\sig_{C_1}(j')}.
\end{equation}

To complete the theorem, it is enough to show that $k[G]$ contains
\begin{equation}\label{eq:Obs2:products}
\prod_{\ell\in C_1\setminus i'}x_\ell^{\sig_{C_1}(\ell)}\qquad\text{and}\qquad\prod_{\ell\in C_2\setminus j'}x_\ell^{\sig_{C_2}(\ell)}.
\end{equation}
By Observation \ref{obs:productconstruction}, there are weights of $1$ and $0$ on the edges of $C_1$ and $C_2$ to achieve this product.  Therefore, by combining Equation (\ref{eq:Obs1:endpoints}) and Expression (\ref{eq:Obs2:products}), we get the desired monomial.
\end{proof}

With these lemmas in hand, we are able to prove our a main theorem of this section and characterize the normality of edge rings.

\begin{theorem}\label{thm:MainResult}%\label{thm:reduction}
Let $G$ be a signed graph, then $k[G]$ is normal if and only if $G$ satisfies the odd cycle condition.
\end{theorem}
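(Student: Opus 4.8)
The plan is to prove both directions using the machinery already assembled. One direction---that failure of the odd cycle condition forces $k[G]$ to be non-normal---is exactly Theorem \ref{thm:converse}, so nothing remains to be done there. The substance is the converse: assuming $G$ satisfies the odd cycle condition, I want to show $k[G]$ is normal, equivalently (by Proposition \ref{prop:Gordan's}(c) and the normality of $\APG$) that $T_2 = T_1$, i.e. every $\alpha \in T_1$ lies in $T_2$. So fix $\alpha \in T_1 = \cone(\PG)\cap\mathcal{L}_G$.

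First I would write $\alpha = \sum_{e\in G} a_e\rho(e)$ with $a_e$ positive reals, which is possible since $\alpha \in \cone(\PG)$ (discarding edges of weight zero). Applying Corollary \ref{cor:reduction1}, I may replace $G$ by a subgraph $H$, each of whose components is a tree or a unicyclic graph with an odd cycle, carrying positive weights $b_e$ with $\alpha = \sum_{e\in H}b_e\rho(e)$. Next I would split each weight as $b_e = \lfloor b_e\rfloor + \{b_e\}$, peeling off the integer part: the integer part contributes $\sum_e \lfloor b_e\rfloor\rho(e)\in T_2$ already, and it remains to handle the fractional remainder $\beta := \alpha - \sum_e\lfloor b_e\rfloor\rho(e) = \sum_e\{b_e\}\rho(e)$, which still lies in $T_1$ (it is $\alpha$ minus an element of $\mathcal{L}_G$, hence in $\mathcal{L}_G$, and it is a nonnegative real combination of the $\rho(e)$, hence in $\cone(\PG)$). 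Here I must be slightly careful: Lemma \ref{lem:oddcyclesonly} is stated with \emph{strict} inequalities $0<a_e<1$, so I would discard any edge whose fractional part is $0$; the remaining subgraph still has the tree-or-unicyclic-odd-cycle shape since it is a subgraph of $H$, and now Lemma \ref{lem:oddcyclesonly} applies and tells me that $\beta$ is supported on a disjoint union of odd cycles, each edge with weight exactly $\tfrac12$.

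So $\beta = \tfrac12\sum_{e\in C_1\cup\cdots\cup C_r}\rho(e)$ for disjoint odd cycles $C_1,\dots,C_r$. By Equation (\ref{eq:APG}) this monomial equals $\prod_{t}\prod_{\ell\in C_t}x_\ell^{\sig_{C_t}(\ell)}$. I now induct on $r$. If $r=0$, $\beta=0\in T_2$. If $r=1$, a single odd cycle $C_1$ has $\sum_{\ell}\sig_{C_1}(\ell)=\sum_{e\in C_1}\sgn(e)$ by Equation (\ref{eq:rewrigingsumP1}), which is odd, so there is a vertex of nonzero signature, and Observation \ref{obs:productconstruction} (taking that vertex together with its two incident edges) gives integer edge weights on $C_1$ realizing $\beta$ in $T_2$; alternatively one notes a single odd cycle already forces a vertex-signature product expressible via alternating subwalks. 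For $r\ge 2$: pick two of the cycles, say $C_1$ and $C_2$. Since $\beta\in T_1\subseteq\mathcal{L}_G$ and the monomial has odd-degree factors at both $C_1$ and $C_2$, these two cycles cannot lie in distinct components of $G$ (a monomial in $\mathcal{L}_G$ restricted to the variables of one component is again in that component's lattice, and a standard parity/degree argument---mirroring the proof of Lemma \ref{lem:converse:Edge}---rules out the odd-degree piece being realizable within a single component unless there is genuine interaction). They are disjoint by construction. Hence the odd cycle condition forces option (3): there is a sign-alternating path $P$ between them. But then Lemma \ref{lem:keyproduct} shows $k[G]$ contains $\prod_{i\in C_1}x_i^{\sig_{C_1}(i)}\prod_{j\in C_2}x_j^{\sig_{C_2}(j)}$, i.e. the $C_1\cup C_2$ portion of $\beta$ lies in $T_2$; dividing it out leaves $\prod_{t\ge 3}\prod_{\ell\in C_t}x_\ell^{\sig_{C_t}(\ell)}$, which is again a $T_1$-element supported on $r-2$ disjoint odd cycles, and I finish by induction.

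The main obstacle I anticipate is the step asserting that two disjoint odd cycles both "visible" in $\beta$ must live in the same component---or more precisely, ruling out that $\beta$ could be realized in $T_2$ in some other way when the cycles \emph{are} in different components. The clean resolution is to observe that if $C_1$ and $C_2$ are in distinct components then $\beta$ decomposes as a sum of $T_1$-elements, one per component, each supported on a single odd cycle (up to grouping), and reduce to the $r=1$ case componentwise---so distinct components are actually harmless and no contradiction is needed; the real content of the odd cycle condition is only invoked when cycles share a component. I would therefore phrase the induction to first peel off any cycle whose component contains no other $C_t$, handling it via the $r=1$ argument, and only then confront a pair of cycles in a common component, where Lemma \ref{lem:keyproduct} delivers the result. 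Care with the strict-versus-weak inequalities in Lemma \ref{lem:oddcyclesonly} and with the parity bookkeeping (ensuring the leftover after removing $C_1\cup C_2$ is still a legitimate $T_1$ element) are the remaining technical points, but both follow the patterns already established in Lemmas \ref{lem:converse:Ehrhart} and \ref{lem:converse:Edge}.
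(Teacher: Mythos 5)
Your overall architecture matches the paper's proof almost exactly: Theorem \ref{thm:converse} for one direction, then Corollary \ref{cor:reduction1}, the floor/fractional-part split, Lemma \ref{lem:oddcyclesonly} to reduce $\beta$ to half-weights on disjoint odd cycles, and Lemma \ref{lem:keyproduct} to absorb pairs of cycles. The gap is in how you dispose of ``leftover'' cycles. Your proposed base case $r=1$ is false: for a single odd cycle $C$ with all weights $\tfrac12$, the monomial $x^{\beta_C}=\prod_{\ell\in C}x_\ell^{\sig_C(\ell)}$ has total degree $\sum_{e\in C}\sgn(e)$, which is odd, while every generator $x^{\rho(e)}$ has even total degree, so $\beta_C$ can never lie in $T_2$. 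Observation \ref{obs:productconstruction} does not rescue this --- it only produces $\prod_{\ell\in C\setminus i}x_\ell^{\sig_C(\ell)}$, i.e.\ the cycle product with the odd-degree factor $x_i^{\sig_C(i)}$ removed, which is exactly why it cannot realize the full $\beta_C$. Consequently your plan to ``peel off any cycle whose component contains no other $C_t$ and handle it via the $r=1$ argument'' cannot work.

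What saves the theorem (and what the paper proves at this point) is that the situation you are trying to handle never arises: since $\beta\in\mathcal{L}_G$ and the lattice splits over components, the restriction $\beta_{G_1}$ of $\beta$ to any component $G_1$ is an integral combination of the $\rho(e)$ and hence has even total degree; since each $x^{\beta_{C_i}}$ has odd total degree, each component must contain an \emph{even} number of the cycles $C_i$. This parity fact is the missing ingredient: it lets you pair the cycles \emph{within} each component, where disjointness plus the odd cycle condition guarantees an alternating path, so Lemma \ref{lem:keyproduct} applies to every pair and the induction closes with no $r=1$ case ever occurring. You gesture at the right parity argument mid-proposal (``mirroring the proof of Lemma \ref{lem:converse:Edge}'') but then retract it in favor of the componentwise $r=1$ reduction; you should instead make that parity argument the centerpiece of the pairing step.
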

\begin{proof}
Theorem \ref{thm:converse} shows that a graph which is normal must satisfy the odd cycle condition.  We now prove the other direction.  Suppose that $G$ is a signed graph which satisfies the odd cycle condition, and suppose that $\alpha$ is in $T_1$.  Our goal is to show $\alpha$ is in $T_2$.  Since $\alpha\in T_1$, we can write $\alpha=\sum_{e\in G}a_e\rho(e)$ with $a_e\geq 0$.  Let $G'$ be the subgraph of $G$ consisting of the edges $e\in G$ where $a_e$ is positive.  By Corollary \ref{cor:reduction1} we can find a subgraph $H'$ whose components are trees or unicyclic graphs with odd cycles such that $\alpha=\sum_{e\in H}a'_e\rho(e)$ with $a'_e$ positive.

Let $\lfloor\alpha\rfloor=\sum_{e\in H'}\lfloor a'_e\rfloor\rho(e)$.  Since $\lfloor\alpha\rfloor$ is a nonnegative integral combination of $\rho(e)$ for $e\in G$, it follows that $\lfloor\alpha\rfloor\in T_2$.  Let $\beta=\alpha-\lfloor\alpha\rfloor$.  In other words, $\beta=\sum_{e\in H'}b_e\rho(e)$ where $b_e=a'_e-\lfloor a'_e\rfloor$.  We observe that $\beta$ can be written as both (1) a nonnegative linear combination of $\rho(e)$ for $e\in G$ and (2) a difference of two integral linear combinations of $\rho(e)$ for $e\in G$.  Therefore, $\beta\in T_1$.  Throughout the rest of the proof, we prove that $\beta\in T_2$ because then $\alpha=\beta+\lfloor\alpha\rfloor$ would be in $T_2$ as it would be a sum of elements of $T_2$.  

Let $H$ be the subgraph consisting of the edges $e\in H'$ where $b_e$ is positive.  By Lemma \ref{lem:oddcyclesonly}, we know that $H$ is a set of odd cycles and $b_e=\frac{1}{2}$.  For a cycle $C$ of $H$, let $\beta_C$ be the restriction of $\beta$ to $C$, i.e., $\beta_C=\sum_{e\in C}b_e\rho(e)$.  Then,
\[
x^{\beta_C}=\prod_{\ell\in C}x_\ell^{\sig_C(\ell)},
\]
which is the same monomial considered in Theorem \ref{thm:converse}.  Therefore, we observe that normality of $k[G]$ is completely based on these products on cycles.

Let $G_1$ be a component of the original graph $G$ and $\{C_1,\dots,C_m\}$ the cycles of $H$ contained in $G$.  We prove that the number of these cycles is even.  Let $\beta_{G_1}$ be the restriction of $\beta$ to $G_1$, i.e., $\beta_{G_1}=\sum_{e\in G_1}b_e\rho(e)$.  Observe that $x^{\beta_{G_1}}=\prod x^{\beta_{C_i}}$.  Since $\beta\in T_1$, $\beta$ can be written as an integral combination of $\rho(e)$ for $e\in G$.  Moreover, since $x^{\rho(e)}$ has even degree, $x^{\beta_{G_1}}$ has even degree.  On the other hand, by Theorem \ref{thm:converse}, we know that each $x^{\beta_{C_i}}$ has odd degree.  Therefore, by considering the parity, there must be an even number of cycles in $\{C_1,\dots,C_m\}$.  Let $C_1$ and $C_2$ be any two cycles in $G_1$.  Since $G_1$ satisfies the odd cycle condition, there is an alternating path between $C_1$ and $C_2$.  By Lemma \ref{lem:keyproduct}, $x^{\beta_{C_1}}x^{\beta_{C_2}}\in k[G]$.  Therefore, by pairing up cycles, $x^{\beta_{G_1}}=\prod x^{\beta_{C_i}}$ is in $k[G]$.  Finally, since the choice of component was arbitrary, $x^\beta=\prod x^{\beta_{G_i}}$ is in $k[G]$.  Hence, $\beta\in T_2$, so $\alpha\in T_2$.
\end{proof}

The characterization of the normality of toric varieties in Theorem \ref{thm:MainResult} can be strengthened by describing the generators $\APG$ over $k[G]$.  

\begin{definition}\label{def:signedPi}
Let $G$ be a signed graph.
A pair $\Pi=\{C,C'\}$ of disjoint odd cycles in a component of $G$ is {\em exceptional} if there does not exist an alternating path connecting $C$ and $C'$.
For an exceptional pair $\Pi=\{C,C'\}$, define \[\frac{1}{2}\rho(\Pi)=\frac{1}{2}\sum_{\pm ij\in C}\rho(ij)+\frac{1}{2}\sum_{\pm ij\in C'}\rho(ij),\] and, in $k[x_1,\dots,x_n]$, let
\[M_{\Pi}=x^{\frac{1}{2}\rho(\Pi)}=\prod_{\ell\in C}x_\ell^{\sig_C(x_\ell)}\prod_{\ell\in C'}x_\ell^{\sig_{C'}(x_\ell)}.\]
\end{definition}

Observe that for a pair of exceptional odd cycles $\Pi=\{C,C'\}$, the monomial $M_{\Pi}$ is precisely the monomial which appears in Theorems \ref{thm:converse} and \ref{thm:MainResult}.

\begin{theorem}\label{thm:Normalization}
Let $G$ be a signed graph and $k[G]$ the edge ring of $G$.  Let $\Pi_1=\{C_1,C_1'\},\dots,\Pi_q=\{C_q,C_q'\}$ denote the exceptional pairs of odd cycles in $G$.  Then, $\APG$ is generated by the monomials $M_{\Pi_1}\dots,M_{\Pi_q}$ as an algebra over $k[G]$.
\end{theorem}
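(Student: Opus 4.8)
The plan is to establish the two inclusions $k[G][M_{\Pi_1},\dots,M_{\Pi_q}]\subseteq\APG$ and $\APG\subseteq k[G][M_{\Pi_1},\dots,M_{\Pi_q}]$. For the first, note that each exceptional pair $\Pi_i=\{C,C'\}$ is a pair of disjoint odd cycles in a single component of $G$ with no alternating path between them, so $G$ fails the odd cycle condition on this pair. The argument in the proof of Lemma~\ref{lem:converse:Ehrhart} uses only that the two cycles are disjoint odd cycles lying in a common component, and the monomial it produces is exactly $M_{\Pi_i}$ by Definition~\ref{def:signedPi}; hence each $M_{\Pi_i}\in\APG$. Since $\APG$ is a ring containing $k[G]$, it contains all of $k[G][M_{\Pi_1},\dots,M_{\Pi_q}]$.

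For the reverse inclusion I would follow the proof of Theorem~\ref{thm:MainResult} and track where the generators $M_{\Pi_i}$ must be adjoined. Fix $\alpha\in T_1$ and write $\alpha=\sum_{e\in G}a_e\rho(e)$ with $a_e\ge 0$; by Corollary~\ref{cor:reduction1} replace this by $\alpha=\sum_{e\in H'}a'_e\rho(e)$ with all $a'_e>0$, where each component of $H'$ is a tree or a unicyclic graph with an odd cycle. Put $\lfloor\alpha\rfloor=\sum_{e\in H'}\lfloor a'_e\rfloor\rho(e)$, a nonnegative integral combination of images of edges of $G$, so $\lfloor\alpha\rfloor\in T_2$ and $x^{\lfloor\alpha\rfloor}\in k[G]$. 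Set $\beta=\alpha-\lfloor\alpha\rfloor=\sum_{e\in H'}b_e\rho(e)$ with $b_e=a'_e-\lfloor a'_e\rfloor\in[0,1)$; as in Theorem~\ref{thm:MainResult}, $\beta\in T_1$, and Lemma~\ref{lem:oddcyclesonly} forces the subgraph $H$ of edges with $b_e>0$ to be a disjoint union of odd cycles with every $b_e=\tfrac12$. Since $x^\alpha=x^{\lfloor\alpha\rfloor}x^\beta$, it suffices to show $x^\beta\in k[G][M_{\Pi_1},\dots,M_{\Pi_q}]$.

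Now decompose $x^\beta=\prod_i x^{\beta_{G_i}}$ over the components $G_i$ of $G$ (this respects integrality since images of edges in distinct components have disjoint support), and within each component $x^{\beta_{G_i}}=\prod_k x^{\beta_{C_k}}$ over the odd cycles $C_1,\dots,C_m$ of $H$ contained in $G_i$, where $x^{\beta_{C_k}}=\prod_{\ell\in C_k}x_\ell^{\sig_{C_k}(\ell)}$. As in Theorem~\ref{thm:MainResult}, comparing parity of total degrees ($x^{\rho(e)}$ has even degree, and by Theorem~\ref{thm:converse} each $x^{\beta_{C_k}}$ has odd degree) shows $m$ is even; choose any pairing of $C_1,\dots,C_m$. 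For a pair $\{C,C'\}$ there are two cases: if an alternating path connects them, then $x^{\beta_{C}}x^{\beta_{C'}}\in k[G]$ by Lemma~\ref{lem:keyproduct}; otherwise $\{C,C'\}$ is, by definition, an exceptional pair, hence equal to some $\Pi_i$, and $x^{\beta_{C}}x^{\beta_{C'}}=M_{\Pi_i}$. In either case the product over the pair lies in $k[G][M_{\Pi_1},\dots,M_{\Pi_q}]$; multiplying over all pairs and all components gives $x^\beta$, and hence $x^\alpha$, in $k[G][M_{\Pi_1},\dots,M_{\Pi_q}]$.

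Because this is an enhancement of Theorem~\ref{thm:MainResult}, I do not expect an essentially new obstacle. The steps needing care are: (i) verifying that the monomial produced by Lemma~\ref{lem:converse:Ehrhart} for an exceptional pair coincides with $M_\Pi$ of Definition~\ref{def:signedPi} signature-by-signature; and (ii) checking that passing to $H'$ and splitting $\alpha=\lfloor\alpha\rfloor+\beta$ never leaves $k[G]$, i.e.\ $H'\subseteq G$ and the component/cycle decomposition of $\beta$ is integral so the parity count is valid. The one genuinely substantive point is the observation that an \emph{arbitrary} pairing of $C_1,\dots,C_m$ suffices: the pairs admitting no alternating path are precisely the exceptional pairs, which are exactly the generators we are allowed to adjoin, so no re-pairing is ever required.
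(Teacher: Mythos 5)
Your proposal is correct and follows essentially the same route as the paper: the paper's proof is a terse reduction to the argument of Theorem~\ref{thm:MainResult} (pair up the weight-$\tfrac12$ odd cycles per component; non-exceptional pairs are handled by Lemma~\ref{lem:keyproduct}, so only the $M_{\Pi_i}$ need be adjoined), and you have simply written that reduction out in full, adding the explicit check via Lemma~\ref{lem:converse:Ehrhart} that each $M_{\Pi_i}\in\APG$. The only cosmetic difference is that the paper reduces to connected $G$ by citing the tensor-product decomposition of normalizations over components, whereas you handle components directly through the disjoint-support observation.
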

\begin{proof}
By \cite[Lemma 2.7]{Villarreal:1998}, the normalization for an edge ring with several components is the tensor product of the normalizations of the edge rings of the components.  Therefore, to compute the normalization of $k[G]$, it is sufficient to assume that $G$ is connected.  In the proof of Theorem \ref{thm:MainResult}, proving that $\alpha\in T_1$ was equivalent to proving that $\beta_{C_1}+\beta_{C_2}\in T_2$ for a pair of odd cycles $C_1$ and $C_2$.  By Lemma \ref{lem:keyproduct}, if $\{C_1,C_2\}$ is not an exceptional pair, then the sum is in $T_2$.  Therefore, monomials corresponding to exceptional pairs are the only extra monomials needed.
\end{proof}

\section{Normal Domains for Mixed Signed, Directed Graphs}
\label{sec:MixedGT}

In Section \ref{sec:Algebraic}, we provided a combinatorial characterization of the normality of quadratic-monomial generated rings where the generators are of the form $x_ix_j$ or $x_i^{-1}x_j^{-1}$.  This section contains one of the main results of this paper, where we extend this characterization to all quadratic-monomial generated domains, i.e., we allow generators of the form $x_i^{-1}x_j$. 

\subsection{Reductions to Signed Graphs}\label{sec:recductions:signed}

In this section, we develop the combinatorial properties to describe arbitrary quadratic-monomial generated domains.  In particular, we show that generators of the form $x_i^{-1}x_j$ correspond, combinatorially, to directed edges and relate the edge rings of mixed signed, directed graphs to those of signed graphs.

\begin{definition}\label{def:MixedGraphs}
A {\em mixed signed, directed graph} is a pair $G=(V,E)$ of {\em vertices}, $V$, and {\em edges}, $E$, where $E$ consists of a set of signed edges and {\em directed edges} between distinct vertex pairs.
As before, we denote a positive edge between $i$ and $j$ as $+ij$ and a negative edge between $i$ and $j$ as $-ij$.
A directed edge from $i$ to $j$ is denoted $(i,j)$.
\end{definition}

Note that, for any vertex $i$, there may be positive and negative loops at $i$ denoted $+ii$ and $-ii$ respectively, but not directed loops (since directed loops correspond to the indentity).
Also, for a pair $i$ and $j$ of distinct vertices, any subset of the four possible edges $+ij,-ij,(i,j)$ and $(j,i)$ can be edges in $G$.  We now extend the definitions pertaining to edge rings to this case.

\begin{definition}\label{def:MixedPolytopes}
Let $G$ be a mixed signed, directed graph with $d$ vertices, possibly with loops, and multiple edges.
Define $\rho:E(G)\rightarrow \R^d$ as $\rho(e)=\sgn(e)(e_i + e_j) \in \R^d$ when $e=\sgn(ij)ij$ is a signed edge of the graph and as $\rho(e)=e_j-e_i$ when $e=(i,j)$ a directed edge of the graph.  
\end{definition}

Using this definition for $\rho$, the {\em edge polytope of $G$} is defined as in Definition \ref{def:edgepolytope}, the semigroups $T_1$ and $T_2$ are defined as in Definition \ref{def:T1T2}, and the Ehrhart and edge rings are defined as in Definition \ref{def:Ehrhart}.

Let $G$ be a mixed signed, directed graph with vertices $i$, $j$, and $t_{(i,j)}$.  Suppose that $(i,j)$ is a directed edge of $G$ and $-it_{(i,j)}$ and $+t_{(i,j)}j$ are signed edges of $G$.  Observe that $\rho((i,j))=\rho(-it_{(i,j)})+\rho(+t_{(i,j)}j)$, see Figure \ref{fig:Exchange}.  We use this equality to construct a signed graph $\widetilde{G}$ from $G$, on a possibly larger vertex set, such that the algebraic properties of $k[\widetilde{G}]$ and $k[G]$ are related.

\begin{figure}[bht]
\centering
\begin{tikzpicture}
\node [draw,circle] (A) at (1,0) {$j$};
\node [draw,circle] (B) at (-1,0) {$i$};
\draw [<-,line width=1pt] (A) -- (B);
\node (G) at (0,-.5) {$G$};

\node [draw,circle] (C) at (7,0) {$j$};
\node [draw,circle] (D) at (3,0) {$i$};
\node [draw,circle] (E) at (5,0) {$t_{(i,j)}$};
\draw [line width=1pt] (D) --node[above] {$-$} (E);
\draw [line width=1pt] (C) --node[above] {$+$} (E);
\node (G') at (5,-.75) {$\widetilde{G}$};
\end{tikzpicture}
\caption[Construction of an augmented signed graph from a mixed signed, directed graph.]{The construction of the augmented signed graph $\widetilde{G}$ from the mixed signed, directed graph $G$ replaces directed edges $(i,j)$ with pairs of signed edges $-it_{(i,j)}$ and $+t_{(i,j)}j$.  Since $\rho((i,j))=\rho\left(-it_{(i,j)}\right)+\rho\left(+t_{(i,j)}j\right)$, many algebraic properties, such as normality, of $k[G]$ are preserved in $k[\widetilde{G}]$. \label{fig:Exchange}}
\end{figure}

\begin{definition}\label{def:AugmentedGraph}
Let $G$ be a mixed signed, directed graph.
The {\em augmented signed graph} $\widetilde{G}$ of $G$ is a signed graph where each directed edge $(i,j)$ in $G$ is replaced by a vertex $t_{(i,j)}$ and a pair of edges $-it_{(i,j)}$ and $+t_{(i,j)}j$.  The new vertex $t_{(i,j)}$, adjacent to only $i$ and $j$, is called an {\em artificial vertex}.
\end{definition}

By the observation above, any monomial $x^{\alpha}\in k[G]$ also appears in $k[\widetilde{G}]$ by replacing each use of a directed edge $(i,j)$ with the pair $-it_{(i,j)}$ and $+t_{(i,j)}j$ from $\widetilde{G}$.  Therefore, $k[G]\subseteq k[\widetilde{G}]$, and, similarly $\APG \subseteq \A(\PP_{\widetilde{G}})$.  The following lemma and Figure \ref{fig:CommutativeDiagram} provide details on these inclusion relationships.

\begin{figure}[hbt]
\centering
\begin{tikzpicture}
\matrix (m) [matrix of math nodes,row sep=3em,column sep=4em,minimum width=2em]
  {
     k[G] & \APG \\
     k[\widetilde{G}]\cap k[x_1^{\pm 1},\dots,x_n^{\pm 1}]  & \A(\PP_{\widetilde{G}})\cap k[x_1^{\pm 1},\dots,x_n^{\pm 1}] \\};
  \path[-stealth]
    (m-1-1) edge [right hook->] node[right] {\rotatebox{90}{$=$}} (m-2-1)
            edge [right hook->] (m-1-2)
    (m-2-1) edge [right hook->] (m-2-2)
    (m-1-2) edge [right hook->] node[right]{\rotatebox{90}{$=$}} (m-2-2);
\end{tikzpicture}
\caption[Commutative Diagram for edge rings from an augmented signed graph, and a mixed signed, directed graph.]{Let $G$ be a mixed signed, directed graph and $\widetilde{G}$ the augmented signed graph for $G$.  The commutative diagram illustrates the inclusion relationship and equalities between $k[G]$, $k[\widetilde{G}]$, $\APG$ and $\A(\PP_{\widetilde{G}})$.  The illustrated equalities are proved in Lemma \ref{lem:Equality}.\label{fig:CommutativeDiagram}}
\end{figure}

\begin{lemma}\label{lem:Equality}
Let $G$ be a mixed signed, directed graph with augmented signed graph $\widetilde{G}$.  Consider $k[\widetilde{G}]$ and $\A(\PP_{\widetilde{G}})$ as subrings of $k[x_1^{\pm 1},\dots,x_n^{\pm 1},t_1^{\pm 1},\dots,t_m^{\pm 1}]$.
Then $k[\widetilde{G}]\cap k[x_1^{\pm 1},\dots,x_n^{\pm 1}] = k[G]$ and $\A(\PP_{\widetilde{G}})\cap k[x_1^{\pm 1},\dots,x_n^{\pm 1}] = \APG$.
\end{lemma}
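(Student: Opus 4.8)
The plan is to prove both equalities simultaneously by exploiting the grading structure that distinguishes the $t$-variables from the $x$-variables. The key observation is that in $\widetilde{G}$, every artificial vertex $t_{(i,j)}$ has degree exactly two, being adjacent only to $i$ and $j$ via the edges $-it_{(i,j)}$ and $+t_{(i,j)}j$. Consequently, in any monomial $\prod_{e\in\widetilde{G}}x^{\rho(e)z_e}$ (with integer weights $z_e$ for $k[\widetilde{G}]$, or real weights for $\A(\PP_{\widetilde G})$ together with the lattice membership), the total exponent of $t_{(i,j)}$ is $-z_{it_{(i,j)}}\cdot(-1)+z_{t_{(i,j)}j}\cdot 1 = z_{t_{(i,j)}j}-z_{it_{(i,j)}}$, so the monomial lies in $k[x_1^{\pm 1},\dots,x_n^{\pm 1}]$ precisely when $z_{it_{(i,j)}}=z_{t_{(i,j)}j}$ for every artificial vertex.

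First I would establish $k[\widetilde G]\cap k[x_1^{\pm1},\dots,x_n^{\pm1}]=k[G]$. The inclusion $\supseteq$ is the observation already noted in the text preceding the lemma: each use of a directed edge $(i,j)$ in a $k[G]$-monomial is replaced by the pair $-it_{(i,j)},+t_{(i,j)}j$, using $\rho((i,j))=\rho(-it_{(i,j)})+\rho(+t_{(i,j)}j)$, and this produces a monomial with no $t$-variables. For $\subseteq$, take a monomial in $k[\widetilde G]$ with no $t$-variables; it is a product of the generators $x^{\rho(e)}$, $e\in E(\widetilde G)$, with nonnegative integer multiplicities $z_e$. Grouping by artificial vertex, the $t_{(i,j)}$-exponent vanishing forces $z_{it_{(i,j)}}=z_{t_{(i,j)}j}=:m_{(i,j)}\ge 0$. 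Replace each such matched pair of edges by $m_{(i,j)}$ copies of the directed edge $(i,j)$ in $G$; since $\rho$ is additive across the swap, the resulting product of $G$-generators equals the original monomial, which therefore lies in $k[G]$. (One should note monomials in $k[\widetilde G]$ are $k$-linear combinations of such products, but since distinct exponent vectors give distinct monomials and we may test the equality monomial-by-monomial, this reduction is harmless — or one simply argues on the level of semigroups $T_2(\widetilde G)$ and $T_2(G)$.)

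The second equality $\A(\PP_{\widetilde G})\cap k[x_1^{\pm1},\dots,x_n^{\pm1}]=\APG$ I would handle at the semigroup level: by Gordan's Lemma (Proposition~\ref{prop:Gordan's}) and Definition~\ref{def:T1T2}, $\A(\PP_{\widetilde G})$ corresponds to $T_1(\widetilde G)=\cone(\PP_{\widetilde G})\cap\mathcal L_{\widetilde G}$, and intersecting with $k[x_1^{\pm1},\dots,x_n^{\pm1}]$ corresponds to further intersecting with the coordinate subspace $U=\{v\in\R^{d+m}:v_{t}=0\text{ for all artificial coords }t\}$. So I must show $\cone(\PP_{\widetilde G})\cap\mathcal L_{\widetilde G}\cap U=\cone(\PP_G)\cap\mathcal L_G$, where the right side sits inside $\R^d\subseteq\R^{d+m}$ via $v\mapsto(v,0)$. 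The inclusion $\supseteq$ follows from $\rho((i,j))=\rho(-it_{(i,j)})+\rho(+t_{(i,j)}j)$, which gives $\cone(\PP_G)\subseteq\cone(\PP_{\widetilde G})$ and $\mathcal L_G\subseteq\mathcal L_{\widetilde G}$, both landing in $U$. For $\subseteq$: given a point $\alpha\in U$ lying in $\cone(\PP_{\widetilde G})$, write $\alpha=\sum_e a_e\rho(e)$ with $a_e\ge0$; I would first argue, using a reduction in the spirit of Corollary~\ref{cor:reduction1} / Lemma~\ref{lem:reducing} applied to the bipartite-like structure around artificial vertices, that one may assume $a_{it_{(i,j)}}=a_{t_{(i,j)}j}$ for every artificial vertex (the $t_{(i,j)}$-coordinate of $\alpha$ is zero, and since $t_{(i,j)}$ meets only those two edges with opposite-sign contributions $+1$ and $-1$, the two coefficients must be equal — no reduction is even needed here, it's immediate from the coordinate equation); then collapsing each matched pair to $a_{(i,j)}:=a_{it_{(i,j)}}=a_{t_{(i,j)}j}$ copies of the directed edge shows $\alpha\in\cone(\PP_G)$. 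An identical collapsing argument handles $\mathcal L_{\widetilde G}\cap U\subseteq\mathcal L_G$ with integer coefficients.

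The main obstacle, and the only place demanding care, is the lattice statement $\mathcal L_{\widetilde G}\cap U\subseteq\mathcal L_G$: an element of $\mathcal L_{\widetilde G}$ is an \emph{integer} combination $\sum z_e\rho(e)$ with possibly negative $z_e$, and while the $t_{(i,j)}$-coordinate being zero still forces $z_{it_{(i,j)}}=z_{t_{(i,j)}j}$ (since that coordinate equals $z_{t_{(i,j)}j}-z_{it_{(i,j)}}$ with no other contributions), one must make sure that replacing the pair by that common integer multiple of $\rho((i,j))$ is legitimate even when the integer is negative — which it is, since $\mathcal L_G$ is a group. So in fact even this step is routine once the degree-two structure of artificial vertices is used; the real content of the lemma is entirely the bookkeeping that each artificial vertex is incident to exactly two edges whose $\rho$-images have canceling entries in the $t$-coordinate. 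I expect the write-up to be short, with the cone and lattice arguments presented in parallel and the coordinate-matching observation stated once up front.
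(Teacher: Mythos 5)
Your proposal is correct and follows essentially the same route as the paper's proof: both rest on the single observation that each artificial vertex $t_{(i,j)}$ meets only the two edges $-it_{(i,j)}$ and $+t_{(i,j)}j$, so a vanishing $t_{(i,j)}$-coordinate forces their weights to agree, after which the matched pair collapses to the directed edge $(i,j)$ via $\rho((i,j))=\rho(-it_{(i,j)})+\rho(+t_{(i,j)}j)$, with the same argument run in the three weight regimes (nonnegative integral, nonnegative real, integral). (There is a harmless sign slip in your displayed exponent computation --- the edge $-it_{(i,j)}$ contributes $z_{it_{(i,j)}}\cdot(-1)$, not $-z_{it_{(i,j)}}\cdot(-1)$ --- but your stated conclusion $z_{t_{(i,j)}j}-z_{it_{(i,j)}}$ is the correct one.)
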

\begin{proof}
The inclusion in one direction has been discussed above.  Let $x^\alpha$ be a monomial in $k[\widetilde{G}]\cap k[x_1^{\pm 1},\dots,x_n^{\pm 1}]$.  Therefore, $\alpha$ can be represented by nonnegative integral edge weights.  Let $(i,j)$ be a directed edge of $G$, then $-it_{(i,j)}$ and $+t_{(i,j)}j$ have the same weight $w_{(i,j)}$ because $e_{t_{(i,j)}}$ has a zero coefficient.  Since $\rho((i,j))=\rho(-it_{(i,j)})+\rho(+t_{(i,j)}j)$, we can assign weights on the edges of $G$ which generate $x^\alpha$: in particular, for any signed edge $e$ of $G$, assign $e$ the same weight as the corresponding signed edge in $\widetilde{G}$, and, for any directed edge $(i,j)$ in $G$, assign $(i,j)$ weight $w_{(i,j)}$.  Since the edge weights are integral, $\alpha\in T_2$ for $G$.

The proof for $\APG$ is identical by after replacing nonnegative integral edge weights with nonnegative real edge weights or integral edge weights.
\end{proof}

\subsection{Normality}
\label{sec:mixedNormality}

We prove another main result in this section, which generalizes the results of Section \ref{sec:Algebraic} to all quadratic-monomial generated domains.  In particular, for all quadratic-generated domains, we prove a complete combinatorial characterization of the normality and provide the generators of the normalization of the edge ring $k[G]$.

\begin{theorem}\label{thm:Digraphs}
Let $G$ be a mixed signed, directed graph and $\widetilde{G}$ the augmented signed graph of $G$, then
$k[G]$ is normal if and only if $\widetilde{G}$ satisfies the odd cycle condition.
\end{theorem}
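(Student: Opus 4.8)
The plan is to reduce the normality of $k[G]$ to the normality of the augmented signed graph $\widetilde{G}$, and then invoke Theorem \ref{thm:MainResult}. The key fact is Lemma \ref{lem:Equality} together with the commutative diagram in Figure \ref{fig:CommutativeDiagram}: we have $k[G] = k[\widetilde{G}]\cap k[x_1^{\pm 1},\dots,x_n^{\pm 1}]$ and $\APG = \A(\PP_{\widetilde{G}})\cap k[x_1^{\pm 1},\dots,x_n^{\pm 1}]$, where both intersections are taken inside the larger Laurent ring that also contains the artificial variables $t_{(i,j)}$. Since $k[G]$ is a normal domain if and only if the corresponding semigroup $T_2(G)$ is normal (Proposition \ref{prop:Gordan's}(c)), and likewise $\APG = k[T_1(G)]$ is always normal, the statement amounts to showing $T_1(G) = T_2(G)$ if and only if $T_1(\widetilde{G}) = T_2(\widetilde{G})$.

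First I would establish the easy direction: if $\widetilde{G}$ satisfies the odd cycle condition, then $k[\widetilde{G}]$ is normal by Theorem \ref{thm:MainResult}, hence $k[\widetilde{G}] = \A(\PP_{\widetilde{G}})$. Intersecting both sides with $k[x_1^{\pm 1},\dots,x_n^{\pm 1}]$ and applying Lemma \ref{lem:Equality} gives $k[G] = \APG$, so $k[G]$ is normal. For the converse, I would argue contrapositively: suppose $\widetilde{G}$ fails the odd cycle condition. Then by Theorem \ref{thm:converse} there is an exceptional pair of disjoint odd cycles $\{C,C'\}$ in $\widetilde{G}$, and the monomial $M$ of Expression (\ref{eq:goalproduct}) lies in $\A(\PP_{\widetilde{G}})\setminus k[\widetilde{G}]$, with $M^2 \in k[\widetilde{G}]$. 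The task is to push this witness down to $G$: I need a monomial in $\APG \setminus k[G]$ witnessing non-normality of $k[G]$.

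The main obstacle is that the exceptional cycles $C, C'$ in $\widetilde{G}$ may pass through artificial vertices, so the witness monomial $M$ a priori involves the $t$-variables and need not lie in $k[x_1^{\pm 1},\dots,x_n^{\pm 1}]$. The plan to resolve this: observe that an artificial vertex $t_{(i,j)}$ has degree exactly $2$ in $\widetilde{G}$, with incident edges $-it_{(i,j)}$ and $+t_{(i,j)}j$, so any cycle through $t_{(i,j)}$ uses both of these edges consecutively and the vertex $t_{(i,j)}$ has signature $\frac12(-1) + \frac12(+1) = 0$ in that cycle. Therefore the exponent of $t_{(i,j)}$ in $M$ is zero for every artificial vertex, so $M \in k[x_1^{\pm1},\dots,x_n^{\pm1}]$, and by Lemma \ref{lem:Equality} we get $M \in \APG$. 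Moreover $M$ is not in $k[G]$: if it were, then writing $M$ via nonnegative integral edge weights on $G$ and replacing each directed edge $(i,j)$ by the pair $-it_{(i,j)}, +t_{(i,j)}j$ would express $M$ with nonnegative integral weights on $\widetilde{G}$, forcing $M \in k[\widetilde{G}]$, a contradiction with $M \notin k[\widetilde{G}]$ (which follows from Lemma \ref{lem:converse:Edge}). Finally, $M^2 \in k[\widetilde{G}] \cap k[x_1^{\pm1},\dots,x_n^{\pm1}] = k[G]$, and $M$ lies in the fraction field of $k[G]$ since it lies in $\APG \subseteq$ the fraction field. Hence $M$ is a root of the monic polynomial $z^2 - M^2$ over $k[G]$, lies in the fraction field but not in $k[G]$, so $k[G]$ is not normal. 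This completes the contrapositive, and combining with the easy direction finishes the proof; one small check to keep honest is that an exceptional pair in $\widetilde{G}$ really consists of cycles of odd \emph{signed-edge} length and has a connecting path but no generalized alternating path, which is exactly the definition invoked, and the degree-two structure of artificial vertices guarantees all the parity bookkeeping transfers back to $G$.
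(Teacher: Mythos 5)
Your proposal is correct and follows essentially the same route as the paper: the forward direction via Theorem \ref{thm:MainResult} and Lemma \ref{lem:Equality}, and the converse by noting that artificial vertices have signature zero in any cycle through them, so the witness monomial $M_{\Pi}$ for an exceptional pair in $\widetilde{G}$ descends to an element of $\APG\setminus k[G]$ whose square lies in $k[G]$. Your spelled-out argument that $M\notin k[G]$ (lifting a hypothetical nonnegative integral representation from $G$ to $\widetilde{G}$) is exactly the inclusion $k[G]\subseteq k[\widetilde{G}]$ that the paper invokes implicitly.
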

\begin{proof}
By Theorem \ref{thm:MainResult}, $\widetilde{G}$ satisfies the odd cycle condition if and only if $k[\widetilde{G}]$ is normal, which occurs if and only if $k[\widetilde{G}] = \A(\PP_{\widetilde{G}})$.  By Lemma \ref{lem:Equality}, if $\widetilde{G}$ satisfies the odd cycle condition, then
\[
k[G] = k[\widetilde{G}]\cap k[x_1^{\pm 1},\dots,x_n^{\pm 1}]=\A(\PP_{\widetilde{G}})\cap k[x_1^{\pm 1},\dots,x_n^{\pm 1}]=\APG.
\]
Since $\APG$ is normal, $k[G]$ is normal.

On the other hand, suppose $\widetilde{G}$ does not satisfy the odd cycle condition.
From Theorem \ref{thm:Normalization}, there is a monomial $M_{\Pi}$ in $\A(\PP_{\widetilde{G}})$, but not in $k[\widetilde{G}]$, where $\Pi$ is a pair of exceptional cycles in $\widetilde{G}$.  Moreover, we observe that if $t_{(i,j)}$ is any artifical node in $\widetilde{G}$, then the exponent of $t_{(i,j)}$ in $M_{\Pi}$ is zero since in any cycle $C$ containing $t_{(i,j)}$, $\sig_C\left(t_{(i,j)}\right)=0$.  Therefore, $M_{\Pi}\in \A(\PP_{\widetilde{G}})\cap k[x_1^{\pm 1},\dots,x_n^{\pm 1}]=\APG$, and, similarly, $M_{\Pi}\not\in k[G]$.  Moreover, in Theorem \ref{thm:converse}, we know that $(M_{\Pi})^2\in k[\widetilde{G}]$, so $M_{\Pi}$ is a root of a monic polynomial with coefficients in $k[G]$.  By the proof of Lemma \ref{lem:Equality}, the weights in Theorem \ref{thm:Normalization} can be mapped from $\widetilde{G}$ to $G$, and, so, $k[G]$ is not normal.
\end{proof}

An interesting consequence of Theorem \ref{thm:Digraphs} occurs for directed graphs, i.e., a mixed signed, directed graph which does not have any signed edges.

\begin{corollary}\label{cor:Digraphs}
Let $G$ be a directed graph, then $k[G]$ is normal.
\end{corollary}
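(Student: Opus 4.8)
The plan is to apply Theorem~\ref{thm:Digraphs} and show that the augmented signed graph $\widetilde{G}$ of a directed graph $G$ always satisfies the odd cycle condition. Since $G$ has no signed edges, every edge of $\widetilde{G}$ lies on one of the two-edge paths $-it_{(i,j)},\,+t_{(i,j)}j$ that replace a directed edge $(i,j)$, and in particular each artificial vertex $t_{(i,j)}$ has degree exactly $2$ in $\widetilde{G}$, with one incident edge negative and one positive. The key structural observation I would establish first is that $\widetilde{G}$ has \emph{no} odd cycles at all: any cycle $C$ in $\widetilde{G}$ alternates between original vertices and artificial vertices (no two original vertices are adjacent, since the only edges are the replacement pairs, and no two artificial vertices are adjacent, since each artificial vertex is joined only to the two original endpoints of its directed edge), hence $C$ passes through equally many original and artificial vertices and has even length. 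Moreover there are no loops in $\widetilde{G}$, since $G$ has no signed loops and directed loops are disallowed.

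Given that $\widetilde{G}$ contains no odd cycle, the odd cycle condition (Definition~\ref{def:OCC}) is satisfied vacuously: it quantifies over pairs of odd cycles, and there are none. I would then invoke Theorem~\ref{thm:Digraphs} to conclude that $k[G]$ is normal.

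The only step requiring care is the bipartiteness argument for $\widetilde{G}$, i.e.\ verifying that in a directed graph the augmentation construction cannot create an odd cycle; but this is immediate from the degree-$2$, two-colorable structure of $\widetilde{G}$ described above, so there is no real obstacle. (One should double-check the edge case where $G$ has parallel directed edges $(i,j)$ and $(j,i)$, or a $2$-cycle $i\to j\to i$: each directed edge gets its own artificial vertex, so the resulting cycle in $\widetilde{G}$ still alternates original/artificial vertices and has even length four — consistent with the claim.)
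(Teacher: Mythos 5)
Your proposal is correct and follows essentially the same route as the paper: observe that $\widetilde{G}$ is bipartite (original vertices versus artificial vertices), hence has no odd cycles, so the odd cycle condition holds vacuously and Theorem~\ref{thm:Digraphs} applies. Your explicit two-coloring argument and the check of the $2$-cycle edge case are just a more detailed version of the paper's one-line bipartiteness observation.
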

\begin{proof}
Observe that the augmented signed graph $\widetilde{G}$ of $G$ is bipartite since every edge of $G$ is replaced with a path of length two.
Hence, $\widetilde{G}$ satisfies the odd cycle condition because $\widetilde{G}$ has no odd cycles.  Thus, by Theorem \ref{thm:Digraphs}, $k[G]=\APG$, which is normal.
\end{proof}

Let $G$ be a mixed signed, directed graph and $\widetilde{G}$ its associated signed graph.  Theorem \ref{thm:Digraphs} implies that the normality of $k[G]$ depends only on odd cycles in $k[\widetilde{G}]$.  Instead of constructing $\widetilde{G}$, we describe, directly, which subgraphs of $G$ produce exceptional odd cycles in $\widetilde{G}$.

Let $C$ be a cycle in $G$ with $k$ signed edges and $\ell$ directed edges.  The corresponding cycle in $\widetilde{G}$ has $k+2\ell$ signed edges.  There is a natural bijection between cycles of $G$ and cycles of $\widetilde{G}$.  Moreover, a cycle in $G$ corresponds to an odd cycle in $\widetilde{G}$ if and only if it has an odd number of signed edges.  

We can also describe alternating paths in $G$.  Since a directed edge in $G$ is replaced with an alternating path of length two in $\widetilde{G}$, an alternating path is one where the signed edges are alternating, adjacent directed edges must point in the same direction, positive edges are incident to the tails of directed edges, and negative edges are incident to the heads of directed edges.

\begin{definition}\label{def:genaltpath}
Suppose $P$ is a sequence of incident edges in $G$ with vertex set $\{i_0,i_1,\dots,i_n\}$.
We say $P$ is a {\em generalized alternating path} if:
\begin{itemize}
\item The subsequence of signed edges obtained by deleting the directed edges alternates,
\item If $(i_{a-1},i_a)$ is a directed edge, then $\sgn(i_{a-2}i_{a-1})=+1$ and $\sgn(i_ai_{a+1})=-1$ for the signed edges $i_{a-2}i_{a-1}$ or $i_ai_{a+1}$ preceding or following the directed edge, if they exist.
\item If $(i_a,i_{a-1})$ is a directed edge, then $\sgn(i_{a-2}i_{a-1})=-1$ and $\sgn(i_ai_{a+1})=+1$ for the signed edges $i_{a-2}i_{a-1}$ or $i_ai_{a+1}$ preceding or following the directed edge, if they exist.
\item If $(i_{a-1},i_a)$ and $(i_b,i_{b-1})$ are directed edges pointing in opposite directions in $P$ then there are an odd number of signed edges between them,
\item If $(i_{a-1},i_a)$ and $(i_{b-1},i_b)$ are directed edges pointing in the same direction in $P$ then there are an even number of signed edges between them.
\end{itemize}
\end{definition}

\begin{definition}
Let $G$ be a mixed signed, directed graph, we say that $G$ satisfies the {\em generalized odd cycle condition} if for every pair of cycles $C$ and $C'$ of $G$ where $C$ and $C'$ both have an odd number of signed edges, at least one of the following occurs: 
\begin{enumerate}
\item $C$ and $C'$ are in distinct components,
\item $C$ and $C'$ have a vertex in common,
\item $C$ and $C'$ have a generalized alternating $i,j$-path in $G$, where $i$ is a vertex of $C$ and $j$ is a vertex of $C'$.
\end{enumerate}
\end{definition}

We compute the normalization of $k[G]$ from the normalization of $k[\widetilde{G}]$:

\begin{theorem}\label{thm:mixedNormalization}
Let $G$ be a mixed signed, directed graph, and $\widetilde{G}$ the associated augmented signed graph.
If the normalization of $k[\widetilde{G}]$, $\mathcal{A}(\mathcal{P}_{\widetilde{G}})$, is generated by monomials $M_{\Pi_1},\dots,M_{\Pi_m}$, over $k[\widetilde{G}]$, then the normalization of $k[G]$, $\APG$, is generated by $M_{\Pi_1},\dots,M_{\Pi_m}$ over $k[G]$.
\end{theorem}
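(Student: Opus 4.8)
The plan is to transfer the generating statement for $\A(\PP_{\widetilde G})$ over $k[\widetilde G]$ down to $\APG$ over $k[G]$ using the intersection equalities of Lemma \ref{lem:Equality} and the observation, already used in the proof of Theorem \ref{thm:Digraphs}, that each monomial $M_{\Pi_i}$ has zero exponent on every artificial variable $t_{(i,j)}$. First I would recall that by Lemma \ref{lem:Equality} we have $\APG = \A(\PP_{\widetilde G}) \cap k[x_1^{\pm 1},\dots,x_n^{\pm 1}]$ and $k[G] = k[\widetilde G] \cap k[x_1^{\pm 1},\dots,x_n^{\pm 1}]$. The key combinatorial fact is that in any cycle $C$ of $\widetilde G$ passing through an artificial vertex $t_{(i,j)}$, the two edges of $C$ incident to $t_{(i,j)}$ are $-it_{(i,j)}$ and $+t_{(i,j)}j$, which have opposite signs, so $\sig_C(t_{(i,j)}) = 0$; hence $M_{\Pi_i} = x^{\frac12\rho(\Pi_i)}$ involves no $t$-variables and lies in $\APG$. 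As in Theorem \ref{thm:Digraphs}, the edge weights realizing $M_{\Pi_i}$ (and more generally any element of $\A(\PP_{\widetilde G})$ lying in $k[x^{\pm 1}]$) in $\widetilde G$ push forward to edge weights in $G$ via $\rho((i,j)) = \rho(-it_{(i,j)}) + \rho(+t_{(i,j)}j)$, so no information about the $M_{\Pi_i}$ is lost in the reduction.

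Next I would take an arbitrary monomial $x^\alpha \in \APG$ and show it factors as a product of a monomial in $k[G]$ times a monomial in the multiplicative monoid generated by $M_{\Pi_1},\dots,M_{\Pi_m}$. Viewing $x^\alpha$ as an element of $\A(\PP_{\widetilde G})$ (legitimate since $\APG \subseteq \A(\PP_{\widetilde G})$ with $\alpha$ having zero exponents on the $t$-variables), the hypothesis gives $x^\alpha = f \cdot \prod_i M_{\Pi_i}^{c_i}$ for some $f \in k[\widetilde G]$ and nonnegative integers $c_i$; since $x^\alpha$ is a monomial and $k[\widetilde G]$ is a monomial ring, $f$ may be taken to be a single monomial $x^\gamma \in k[\widetilde G]$. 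Now I examine the $t$-variables: the left side and each $M_{\Pi_i}$ have zero $t$-exponents, so $x^\gamma$ also has zero $t$-exponents, i.e. $x^\gamma \in k[\widetilde G] \cap k[x_1^{\pm 1},\dots,x_n^{\pm1}] = k[G]$ by Lemma \ref{lem:Equality}. This exhibits $x^\alpha$ as an element of the $k[G]$-algebra generated by the $M_{\Pi_i}$, and conversely that algebra is contained in $\APG$ because $k[G] \subseteq \APG$ and each $M_{\Pi_i} \in \APG$, so equality of the two algebras follows.

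The main obstacle — really the only subtle point — is justifying that in the factorization $x^\alpha = f \cdot \prod_i M_{\Pi_i}^{c_i}$ the coefficient $f$ can be taken to be a single monomial rather than a general element of $k[\widetilde G]$, and that the exponents $c_i$ are nonnegative integers; both follow from the fact that $k[\widetilde G]$ is a monomial ($=$ semigroup) ring, so the $k[\widetilde G]$-module $\A(\PP_{\widetilde G})$ is spanned by monomials and each monomial of $\A(\PP_{\widetilde G})$ is individually a monomial multiple of some product $\prod_i M_{\Pi_i}^{c_i}$ with $c_i \in \Z_{\ge 0}$. One must also check that once the $t$-exponents are forced to zero the remaining $x$-exponent vector $\gamma$ still represents an element of $T_2$ for $G$ — but this is precisely the content of Lemma \ref{lem:Equality}, so it requires no new argument. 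I would phrase the whole proof at the level of semigroups $T_2(G) \subseteq T_2(\widetilde G)$ and the monomial generators, which makes the "single monomial" issue automatic.
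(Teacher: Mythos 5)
Your proposal is correct and follows essentially the same route as the paper's proof: reduce to a single monomial $x^\alpha\in\APG\subseteq\A(\PP_{\widetilde G})$, use the hypothesis to write it as a monomial of $k[\widetilde G]$ times a product of the $M_{\Pi_i}$'s, observe that the $t$-exponents of $x^\alpha$ and of each $M_{\Pi_i}$ vanish (since $\sig_C(t_{(i,j)})=0$) so the remaining factor has no $t$-variables, and conclude via Lemma \ref{lem:Equality} that it lies in $k[G]$. Your explicit attention to why the coefficient may be taken to be a single monomial is a slightly more careful rendering of the paper's remark that both rings are monomial-generated, but it is the same argument.
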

                        
\begin{proof}
By the argument in Theorem \ref{thm:Digraphs}, we know that each $M_{\Pi_\ell}$ is in $\A(\PP_{\widetilde{G}})\cap k[x_1^{\pm 1},\dots,x_n^{\pm 1}]=\APG$.  Moreover, we know, from Theorem \ref{thm:Normalization}, that the $M_{\Pi_\ell}$'s generate $\A(\PP_{\widetilde{G}})$ over $k[\widetilde{G}]$.  Our goal is to show that these monomials generate $\APG$ over $k[G]$.

We observe that $k[\widetilde{G}]$ and $\A(\PP_{\widetilde{G}})$ are monomial-generated since each $\rho(e)$ and $M_{\Pi}$ is a monomial.  Therefore, $\alpha\in\A(\PP_{\widetilde{G}})$ if and only if each of its terms are in $\A(\PP_{\widetilde{G}})$.  A similar statement holds for $k[\widetilde{G}]$.  Let $\alpha\in\APG$, then $\alpha\in\A(\PP_{\widetilde{G}})$.  Therefore, each term of $\alpha$ is in $\A(\PP_{\widetilde{G}})$, and let $\alpha_1$ a term of $\alpha$.  Since $\A(\PP_{\widetilde{G}})$ is generated as an algebra over $k[\widetilde{G}]$ by the $M_{\Pi_\ell}$'s, we know that $\alpha_1=\beta\prod M_{\Pi_{\ell_j}}$ where $\beta\in k[\widetilde{G}]$.  Since none of the artificial vertices appear in $M_{\Pi_\ell}$ for any $\ell$ or in $\alpha_1$, it follows that $\beta\in k[x_1^{\pm 1},\dots,x_n^{\pm 1}]$, and, so, by Lemma \ref{lem:Equality}, the equation for $\alpha_1$ is true in $k[G]$.  Therefore, $\APG$ is generated over $k[G]$ by the $M_{\Pi_\ell}$'s.
\end{proof}

\section{Conclusion}\label{sec:conclusion}

In this paper, we have extended the normality characterization of edge rings, originally presented by Hibi and Ohsugi \cite{OhsugiHibi:1998} and Simis, Vasconcelos, and Villarreal \cite{Villarreal:1998}, to arbitrary mixed signed, directed graphs.  This results in a  complete characterization of the normality and the normalization of quadratical-monomial generated polynomial rings in terms of the combinatorial properties of the associated graphs.  The proofs in this paper are new and have additional subtleties when compared to \cite{OhsugiHibi:1998} and \cite{Villarreal:1998} due to the possibility of cancellation.

\section*{Acknowledgements}\label{sec:Acknowledgements}
This work was partially supported by a grant from the Simons Foundation (\#282399 to Michael Burr) and NSF Grant CCF-1527193.
% BibTeX users please use one of
%\bibliographystyle{spbasic}      % basic style, author-year citations
%\bibliographystyle{spmpsci}      % mathematics and physical sciences
%\bibliographystyle{spphys}       % APS-like style for physics
%\bibliography{}   % name your BibTeX data base
\bibliography{bibliographyNormalDomains}

\begin{thebibliography}{10}
\providecommand{\url}[1]{{#1}}
\providecommand{\urlprefix}{URL }
\expandafter\ifx\csname urlstyle\endcsname\relax
  \providecommand{\doi}[1]{DOI~\discretionary{}{}{}#1}\else
  \providecommand{\doi}{DOI~\discretionary{}{}{}\begingroup
  \urlstyle{rm}\Url}\fi

\bibitem{BermejoGimenezSimis:2009}
Bermejo, I., Gimenez, P., Simis, A.: Polar syzygies in characteristic zero: the
  monomial case.
\newblock Journal of Pure and Applied Algebra \textbf{213}(1), 1--21 (2009)

\bibitem{BermejoGarciaReyes:2015}
Bermejo, I., Garc\'\i~a Marco, I., Reyes, E.: Graphs and complete intersection
  toric ideals.
\newblock Journal of Algebra and its Applications \textbf{14}(9), 1540,011, 37
  (2015)

\bibitem{BrunsHerzog-CohenMacaulay}
Bruns, W., Herzog, J.: Cohen-Macaulay Rings, Revised Ed.
\newblock Cambridge University Press (1996)

\bibitem{BrunsVillarreal:1997}
Bruns, W., Vasconcelos, W.V., Villarreal, R.H.: Degree bounds in monomial
  subrings.
\newblock Illinois Journal of Mathematics \textbf{41}(3), 341-- 353 (1997)

\bibitem{CoxLittleSchenck-ToricVarieties}
Cox, D., Little, J., Schenck, H.: Toric Varieties.
\newblock American Mathematical Society (2011)

\bibitem{DAli:2015}
D'Al\`\i, A.: Toric ideals associated with gap-free graphs.
\newblock Journal of Pure and Applied Algebra \textbf{219}(9), 3862--3872
  (2015)

\bibitem{Estrada:2000}
Estrada, M.E.: On a theorem of {F}r\"oberg and saturated graphs.
\newblock Revista Colombiana de Matem\'aticas \textbf{34}(1), 19--24 (2000)

\bibitem{GitlerValencia:2005}
Gitler, I., Valencia, C.E.: Bounds for invariants of edge-rings.
\newblock Communications in Algebra \textbf{33}(5), 1603--1616 (2005)

\bibitem{GitlerVillareal:2011}
Gitler, I., Villarreal, R.H.: Graphs, rings and polyhedra, \emph{Aportaciones
  Matem\'aticas: Textos [Mathematical Contributions: Texts]}, vol.~35.
\newblock Sociedad Matem\'atica Mexicana, M\'exico; Instituto de Matem\'aticas,
  UNAM, M\'exico (2011)

\bibitem{HerzogHibiZheng:2004}
Herzog, J., Hibi, T., Zheng, X.: Monomial ideals whose powers have a linear
  resolution.
\newblock Mathematica Scandinavica \textbf{95}(1), 23--32 (2004)

\bibitem{HibiHigashitaniKimura:2014}
Hibi, T., Higashitani, A., Kimura, K., O'Keefe, A.B.: Depth of initial ideals
  of normal edge rings.
\newblock Communications in Algebra \textbf{42}(7), 2908--2922 (2014)

\bibitem{HibiKatthan:2014}
Hibi, T., Katth\"an, L.: Edge rings satisfying {S}erre's condition {$(R_1)$}.
\newblock Proceedings of the American Mathematical Society \textbf{142}(7),
  2537--2541 (2014)

\bibitem{HibiKattan:2014}
Hibi, T., Katth{\"a}n, L.: Edge rings satisfying {Serre's} condition ({R1}).
\newblock Proceedings of the American Mathematical Society \textbf{142}(7),
  2537--2541 (2014)

\bibitem{HibiMatsudaOhsugi:2016}
Hibi, T., Matsuda, K., Ohsugi, H.: Strongly {K}oszul edge rings.
\newblock Acta Mathematica Vietnamica \textbf{41}(1), 69--76 (2016)

\bibitem{HibiMoriOhsugiShikama:2016}
Hibi, T., Mori, A., Ohsugi, H., Shikama, A.: The number of edges of the edge
  polytope of a finite simple graph.
\newblock Ars Mathematica Contemporanea \textbf{10}(2), 323--332 (2016)

\bibitem{HibiNishiyamaOhsugiShikama:2014}
Hibi, T., Nishiyama, K., Ohsugi, H., Shikama, A.: Many toric ideals generated
  by quadratic binomials possess no quadratic {G}r\"obner bases.
\newblock Journal of Algebra \textbf{408}, 138--146 (2014)

\bibitem{HunekeSwanson:2006}
Huneke, C., Swanson, I.: Integral closure of ideals, rings, and modules,
  \emph{London Mathematical Society Lecture Note Series}, vol. 336.
\newblock Cambridge University Press, Cambridge (2006)

\bibitem{LipmanBurr:2017}
Lipman, D., Burr, M.: Serre's properties for quadratic generated domains from
  graphs (2017).
\newblock In preparation

\bibitem{Matsui:2003}
Matsui, T.: Ehrhart series for connected simple graphs.
\newblock Graphs and Combinatorics \textbf{29}(3), 617--635 (2013)

\bibitem{OhsugiHibi:1998}
Ohsugi, H., Hibi, T.: Normal polytopes arising from finite graphs.
\newblock Journal of Algebra \textbf{207}(2), 409 -- 426 (1998)

\bibitem{OhsugiHibi:2000}
Ohsugi, H., Hibi, T.: Compressed polytopes, initial ideals and complete
  multipartite graphs.
\newblock Illinois Journal of Mathematics \textbf{44}(2), 391--406 (2000)

\bibitem{HibiOhsugi:2006}
Ohsugi, H., Hibi, T.: Special simplices and {G}orenstein toric rings.
\newblock Journal of Combinatorial Theory. Series A \textbf{113}(4), 718--725
  (2006)

\bibitem{OhsugiHibi:2017}
Ohsugi, H., Hibi, T.: A {G}r\"obner basis characterization for chordal
  comparability graphs.
\newblock European Journal of Combinatorics \textbf{59}, 122--128 (2017)

\bibitem{Villarreal:1994}
Simis, A., Vasconcelos, W., Villarreal, R.: On the ideal theory of graphs.
\newblock Journal of Algebra \textbf{167}(2), 389 -- 416 (1994)

\bibitem{Villarreal:1998}
Simis, A., Vasconcelos, W.V., Villarreal, R.H.: The integral closure of
  subrings associated to graphs.
\newblock Journal of Algebra \textbf{199}(1), 281 -- 289 (1998)

\bibitem{SturmfelsWelker:2012}
Sturmfels, B., Welker, V.: Commutative algebra of statistical ranking.
\newblock Journal of Algebra \textbf{361}, 264--286 (2012)

\bibitem{Sullivant:2008}
Sullivant, S.: Combinatorial symbolic powers.
\newblock Journal of Algebra \textbf{319}(1), 115--142 (2008)

\bibitem{Sullivant:2010}
Sullivant, S.: Normal binary graph models.
\newblock Annals of the Institute of Statistical Mathematics \textbf{62}, 717
  -- 726 (2010)

\bibitem{TatakisThoma:2013}
Tatakis, C., Thoma, A.: On complete intersection toric ideals of graphs.
\newblock Journal of Algebraic Combinatorics. An International Journal
  \textbf{38}(2), 351--370 (2013)

\bibitem{Vasconcelos:2005}
Vasconcelos, W.: Integral closure.
\newblock Springer Monographs in Mathematics. Springer-Verlag, Berlin (2005).
\newblock Rees algebras, multiplicities, algorithms

\bibitem{Villarreal:1995}
Villarreal, R.H.: Rees algebras of edge ideals.
\newblock Communications in Algebra \textbf{23}(9), 3513--3524 (1995)

\bibitem{Villarreal:1996}
Villarreal, R.H.: Normality of subrings generated by square free monomials.
\newblock Journal of Pure and Applied Algebra \textbf{113}(1), 91 -- 106 (1996)

\bibitem{Villarreal:2005}
Villarreal, R.H.: Normality of semigroups with some links to graph theory.
\newblock Discrete Mathematics \textbf{302}(1-3), 267--284 (2005)

\bibitem{BrunsKoch:2001}
W., B., R., K.: Computing the integral closure of an affine semigroup.
\newblock Universitatis Iagellonicae Acta Mathematica \textbf{39}, 59--70
  (2001)

\bibitem{Zalavsky:2009}
Zaslavsky, T.: Totally frustrated states in the chromatic theory of gain
  graphs.
\newblock European Journal of Combinatorics \textbf{30}(1), 133--156 (2009)

\end{thebibliography}
\bibliographystyle{spmpsci}
% Non-BibTeX users please use
%\begin{thebibliography}{}
%
% and use \bibitem to create references. Consult the Instructions
% for authors for reference list style.
%
%\bibitem{RefJ}
% Format for Journal Reference
%Author, Article title, Journal, Volume, page numbers (year)
% Format for books
%\bibitem{RefB}
%Author, Book title, page numbers. Publisher, place (year)
% etc
%\end{thebibliography}

\end{document}